\numberwithin{equation}{section}
\newtheorem{theorem}{Theorem}[section]
\newtheorem{definition}[theorem]{Definition}
\newtheorem{proposition}[theorem]{Proposition}
\newtheorem{corollary}[theorem]{Corollary}
\theoremstyle{definition}
\newtheorem{example}[theorem]{Example}
\newtheorem{remark}[theorem]{Remark}
\newcommand{\cO}{\mbox{${\cal O}$}}
\newcommand{\cU}{\mbox{${\cal U}$}}
\newcommand{\cW}{\mbox{${\cal W}$}}
\title{\textbf{Koszulity of finitely semi-graded algebras}}
\author{José Oswaldo Lezama Serrano\\
\texttt{jolezamas@unal.edu.co}
\\Jaime Andrés Gómez Ortíz
\\ Seminario de Álgebra Constructiva - SAC$^2$\\ Departamento de Matemáticas\\ Universidad Nacional de
Colombia, Sede Bogot\'a}
\date{}
\begin{document}
\maketitle
\begin{abstract}
\noindent In this paper, we introduce the class of finitely semi-graded algebras which extends the
connected graded algebras finitely generated in degree one. The Koszul behavior of finitely
semi-graded algebras is investigated by the distributivity of some associated lattice of ideals.
The Hilbert series, the Poincaré series and the Yoneda algebra are defined for this class of
algebras. Finitely semi-graded algebras include many important examples of non $\mathbb{N}$-graded
algebras finitely generated in degree one coming from mathematical physics, and for these concrete
examples the Koszulity will be established, as well as, the explicit computation of its Hilbert and
Poincaré series.

\bigskip

\noindent \textit{Key words and phrases.} Graded algebras, Hilbert and Poincaré series, Yoneda
algebra, distributive lattices, Koszul algebras, skew $PBW$ extensions.

\bigskip

\noindent 2010 \textit{Mathematics Subject Classification.} Primary: 16W70. Secondary: 16S37,
16S38, 16S36, 16W50.
\end{abstract}
%-------------------------------------------------
%-------------------------------------------------

\section{Introduction}

Finitely graded algebras over fields cover many important classes of non-commutative rings and
algebras coming from mathematical physics; examples of these algebras are the multi-parameter
quantum affine $n$-space, the Jordan plane, the Manin algebra $M_q(2)$, the multiplicative analogue
of the Weyl algebra, among many others. There exists recent interest in developing the
non-commutative projective algebraic geometry for finitely graded algebras (see for example
\cite{Ginzburg1}, \cite{Kanazawa}, \cite{Liu-Wu}, \cite{Liu-Wang}, \cite{Rogalski}, \cite{Suarez}).
However, for non $\mathbb{N}$-graded algebras only few works in this direction have been realized
(\cite{Gaddis}, \cite{lezamalatorre}). Some examples of non $\mathbb{N}$-graded algebras generated
in degree one are the dispin algebra $\cU(osp(1,2))$, the Woronowicz algebra
$\cW_{\nu}(\mathfrak{sl}(2,K))$, the quantum algebra $\cU'(\mathfrak{so}(3,K))$, the quantum
symplectic space $\cO_q(\mathfrak{sp}(K^{2n}))$, some algebra of operators, among others. One of
the most important algebraic properties studied in non-commutative algebraic geometry for graded
algebras is the Koszulity. Koszul graded algebras were defined by Stewart B. Priddy in
\cite{Priddy} and have many equivalent characterizations involving the Hilbert series, the Poincaré
series, the Yoneda algebra and some associated lattices of vector spaces. In this paper we are
interested in investigating the Koszul behavior for algebras over fields not being necessarily
$\mathbb{N}$-graded. For this purpose we will introduce in this work the finitely semi-graded
algebras; these type of algebras extend finitely graded algebras over fields generated in degree
one, and conform a particular subclass of finitely semi-graded rings defined in
\cite{lezamalatorre}. In order to study the Koszulity for finitely semi-graded algebras we will
define its Hilbert series, the Poincaré series, the Yoneda algebra, and we will investigate some
associated lattices of vector spaces similarly as this is done in the classical graded case.

For finitely semi-graded algebras we will prove the uniqueness of the Hilbert series (Corollary
\ref{corollary3.10}); in the proof we used a beautiful paper by Jason Bell and James J. Zhang
(\cite{ZhangJ4}), where this property was established for connected graded algebras finitely
generated in degree $1$. The uniqueness of the Poincaré series of a given finitely semi-graded
algebra was also proved assuming that its Yoneda algebra is finitely generated in degree one and
the base field has a free homogeneous resolution (Corollary \ref{corollary19.3.22}). We will see
that a finitely semi-graded algebra has a natural induced $\mathbb{N}$-filtration, so we will show
that the Hilbert series of the algebra coincides with the Hilbert series of its  associated graded
algebra. We will associate to a finitely semi-graded algebra a lattice of vector spaces defined
with the ideal of relations of its presentation, and from a result that gives conditions over the
distributiveness of this lattice (Theorem \ref{theorem19.4.4}), we will define the semi-graded
Koszul algebras, extending this way the classical notion of graded Koszul algebras. One important
part of the present paper consists in giving many examples of finitely semi-graded algebras as well
as examples of semi-graded Koszul algebras. Most of the examples that we will present arise in
mathematical physics and can be interpreted as skew $PBW$ extensions. This class of non-commutative
rings of polynomial type were introduced in \cite{Gallego2}, and they are a good global way of
describing rings and algebras not being necessarily $\mathbb{N}$-graded. Thus, the general results
that we will prove for finitely semi-graded algebras will be in particular applied to skew $PBW$
extensions; in Corollary \ref{corollary16.3.16} we explicitly computed the Hilbert series of skew
$PBW$ extensions that are finitely semi-graded algebras over fields, covering this way many
examples of algebras coming from quantum physics. Finally, in Theorem \ref{theorem19.4.13} and
Example \ref{example4.8} we present examples of non $\mathbb{N}$-graded algebras that have Koszul
behavior, i.e, they are semi-graded Koszul.

The paper is organized in the following way: In the first section we review the basic facts on
semi-graded rings and skew $PBW$ extensions that we need for the rest of the work. In the second
section we introduce the semi-graded algebras and we present many examples of them. The list of
examples include not only skew $PBW$ extensions that are algebras over fields, but also other non
graded algebras that can not be described as skew extensions. The third section is dedicated to
construct and prove the uniqueness of the Hilbert series, the Poincaré series and the Yoneda
algebra of a finitely semi-graded algebra. In the last section we study the Koszul behavior of
finitely semi-graded algebras and we will show that some non $\mathbb{N}$-graded algebras coming
from quantum physics are semi-graded Koszul.

If not otherwise noted, all modules are left modules and $K$ will be an arbitrary field.

In order to appreciate better the results of the paper we recall first the definition of finitely
graded algebras over fields and its Hilbert series (see \cite{Rogalski}). Let $A$ be a $K$-algebra,
$A$ is \textit{finitely graded} if: (a) $A$ is $\mathbb{N}$-graded, i.e., $A$ has a graduation
$A=\bigoplus_{n\geq 0} A_n$, $A_nA_m\subseteq A_{n+m}$ for every $n,m\geq 0$; (b) $A$ is
\textit{connected}, i.e., $A_0=K$; (c) $A$ is finitely generated as $K$-algebra. Thus, $A$ is
\textit{locally finite}, i.e.,  $\dim_K A_n<\infty$ for every $n\geq 0$, and hence the
\textit{Hilbert series} of $A$ is defined by
\begin{center}
$h_A(t):=\sum_{n=0}^\infty(\dim_K A_n)t^n$.
\end{center}

\subsection{Semi-graded rings and modules}

\noindent In this starting subsection we recall the definition and some basic facts about
semi-graded rings and modules, more details and the proofs omitted here can be found in
\cite{lezamalatorre}.

\begin{definition}
Let $B$ be a ring. We say that $B$ is semi-graded $(SG)$ if there exists a collection
$\{B_n\}_{n\geq 0}$ of subgroups $B_n$ of the additive group $B^+$ such that the following
conditions hold:
\begin{enumerate}
\item[\rm (i)]$B=\bigoplus_{n\geq 0}B_n$.
\item[\rm (ii)]For every $m,n\geq 0$, $B_mB_n\subseteq B_0\oplus \cdots \oplus B_{m+n}$.
\item[\rm (iii)]$1\in B_0$.
\end{enumerate}
The collection $\{B_n\}_{n\geq 0}$ is called a semi-graduation of $B$ and we say that the elements
of $B_n$ are homogeneous of degree $n$. Let $B$ and $C$ be semi-graded rings and let $f: B\to C$ be
a ring homomorphism, we say that $f$ is homogeneous if $f(B_n)\subseteq C_{n}$ for every $n\geq 0$.
\end{definition}

\begin{definition}
Let $B$ be a $SG$ ring and let $M$ be a $B$-module. We say that $M$ is a $\mathbb{Z}$-semi-graded,
or simply semi-graded, if there exists a collection $\{M_n\}_{n\in \mathbb{Z}}$ of subgroups $M_n$
of the additive group $M^+$ such that the following conditions hold:
\begin{enumerate}
\item[\rm (i)]$M=\bigoplus_{n\in \mathbb{Z}} M_n$.
\item[\rm (ii)]For every $m\geq 0$ and $n\in \mathbb{Z}$, $B_mM_n\subseteq \bigoplus_{k\leq m+n}M_k$.
\end{enumerate}
The collection $\{M_n\}_{n\in \mathbb{Z}}$ is called a \textit{semi-graduation} of $M$ and we say
that the elements of $M_n$ are \textit{homogeneous} of degree $n$. We say that $M$ is positively
semi-graded, also called $\mathbb{N}$-semi-graded, if $M_n=0$ for every $n<0$. Let $f: M\to N$ be
an homomorphism of $B$-modules, where $M$ and $N$ are semi-graded $B$-modules; we say that $f$ is
homogeneous if $f(M_n)\subseteq N_n$ for every $n\in \mathbb{Z}$.
\end{definition}

Let $B$ be a semi-graded ring and $M$ be a semi-graded $B$-module, let $N$ be a submodule of $M$
and $N_n:=N\cap M_n$, $n\in \mathbb{Z}$; observe that the sum $\sum_{n}N_n$ is direct. This induces
the following definition.

\begin{definition}
Let $B$ be a $SG$ ring and $M$ be a semi-graded module over $B$. Let $N$ be a submodule of $M$, we
say that $N$ is a semi-graded submodule of $M$ if $N=\bigoplus_{n\in \mathbb{Z}}N_n$.
\end{definition}

We present next an important class of semi-graded rings that includes finitely graded algebras.

\begin{definition}\label{definition17.5.4}
Let $B$ be a ring. We say that $B$ is finitely semi-graded $(FSG)$ if $B$ satisfies the following
conditions:
\begin{enumerate}
\item[\rm (i)]$B$ is $SG$.
\item[\rm (ii)]There exists finitely many elements $x_1,\dots,x_n\in B$ such that the
subring generated by $B_0$ and $x_1,\dots,x_n$ coincides with $B$.
\item[\rm (iii)]For every $n\geq 0$, $B_n$ is a free $B_0$-module of finite dimension.
\end{enumerate}
Moreover, if $M$ is a $B$-module, we say that $M$ is finitely semi-graded if $M$ is semi-graded,
finitely generated, and for every $n\in \mathbb{Z}$, $M_n$ is a free $B_0$-module of finite
dimension.
\end{definition}

From the definitions above we get the following elementary but key facts.

\begin{proposition}\label{proposition17.5.5}
Let $B=\bigoplus_{n\geq 0}B_n$ be a $SG$ ring. Then,
\begin{enumerate}
\item[\rm (i)]$B_0$ is a subring of $B$. Moreover, for any $n\geq 0$, $B_0\oplus \cdots \oplus B_{n}$ is a $B_0-B_0$-bimodule, as well as $B$.
\item[\rm (ii)]$B$ has a standard $\mathbb{N}$-filtration given by
\begin{equation}\label{equ17.5.1}
F_n(B):=B_0\oplus \cdots \oplus B_{n}.
\end{equation}
\item[\rm (iii)]The associated graded ring $Gr(B)$ satisfies
\begin{center}
$Gr(B)_n\cong B_n$, for every $n\geq 0$ $($isomorphism of abelian groups$)$.
\end{center}
\item[\rm (iv)]Let $M=\bigoplus_{n\in \mathbb{Z}}M_n$ be a semi-graded $B$-module and $N$ a submodule of $M$. The following conditions are equivalent:
\begin{enumerate}
\item[\rm (a)]$N$ is semi-graded.
\item[\rm (b)]For every $z\in N$, the homogeneous components of $z$ are in $N$.
\item[\rm (c)]$M/N$ is semi-graded with semi-graduation given by
\begin{center}
$(M/N)_n:=(M_n+N)/N$, $n\in \mathbb{Z}$.
\end{center}
\end{enumerate}
\end{enumerate}
\end{proposition}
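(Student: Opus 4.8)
The plan is to handle the four parts independently, since each is a direct consequence of the three $SG$ axioms together with the directness of the decomposition $B=\bigoplus_{n\geq 0}B_n$; none of them needs the finite-generation or freeness hypotheses of Definition~\ref{definition17.5.4}. For (i), I would first observe that taking $m=n=0$ in axiom (ii) gives $B_0B_0\subseteq B_0$, so $B_0$ is closed under multiplication; since it is an additive subgroup containing $1$ by axiom (iii), it is a subring. That $F_n:=B_0\oplus\cdots\oplus B_n$ is a $B_0-B_0$-bimodule follows because, for $0\leq k\leq n$, axiom (ii) yields $B_0B_k\subseteq B_0\oplus\cdots\oplus B_k\subseteq F_n$ and symmetrically $B_kB_0\subseteq F_n$, so both one-sided $B_0$-actions preserve $F_n$; the claim for $B$ itself is immediate from $B_0\subseteq B$. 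For (ii), the chain $F_0\subseteq F_1\subseteq\cdots$ and the exhaustion $\bigcup_n F_n=B$ are built into the decomposition, $1\in F_0$ is axiom (iii), and multiplicativity $F_mF_n\subseteq F_{m+n}$ reduces to $B_iB_j\subseteq F_{m+n}$ for $i\leq m$ and $j\leq n$, which is axiom (ii) since $i+j\leq m+n$.

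For (iii), I would invoke the standard isomorphism theorem: because the sum defining $F_n$ is direct, the composite $B_n\hookrightarrow F_n\twoheadrightarrow F_n/F_{n-1}=Gr(B)_n$ is an isomorphism of abelian groups, with injectivity coming from $B_n\cap F_{n-1}=0$ and surjectivity from the fact that every coset in $F_n/F_{n-1}$ is represented by its degree-$n$ component. The real content of the proposition is part (iv), which I would establish as a cycle of implications governed by the uniqueness of the homogeneous decomposition in $M=\bigoplus_{n\in\mathbb{Z}}M_n$. For (a)$\Rightarrow$(b), an element $z\in N=\bigoplus_n N_n$ is a sum of terms lying in $N_n\subseteq M_n$, and uniqueness of the decomposition in $M$ forces these to be exactly the homogeneous components of $z$ in $M$, hence they lie in $N$. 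For (b)$\Rightarrow$(a), the inclusion $\bigoplus_n(N\cap M_n)\subseteq N$ is automatic and the reverse inclusion is precisely the hypothesis, so $N=\bigoplus_n(N\cap M_n)$. For (b)$\Rightarrow$(c), the key point is that $\sum_n(M_n+N)/N$ is direct: if $\sum_n(m_n+N)=0$ with $m_n\in M_n$, then $\sum_n m_n\in N$, and (b) gives each $m_n\in N$; the module condition $B_m(M/N)_n\subseteq\bigoplus_{k\leq m+n}(M/N)_k$ then descends from the corresponding property of $M$ by reducing representatives modulo $N$. Finally for (c)$\Rightarrow$(b), writing $z=\sum_n z_n\in N$, the relation $\sum_n(z_n+N)=0$ in the now semi-graded quotient $M/N$ forces each $z_n+N=0$ by directness, i.e.\ $z_n\in N$.

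The one subtlety worth flagging, and the only place where care is genuinely needed, is in part (iv): throughout one must keep straight the difference between the homogeneous components of $z$ computed in the ambient module $M$ and those arising from the purported semi-graduation of $N$ or $M/N$. The whole argument rests on the fact that these agree because $N_n=N\cap M_n\subseteq M_n$, so the directness of $\bigoplus_n M_n$ transfers verbatim to $N$ and, via the quotient map, to $M/N$. Beyond this bookkeeping I expect no genuine obstacle; all of the multiplicative containments are read off directly from axiom (ii).
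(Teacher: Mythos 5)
Your proof is correct and complete in all four parts: the multiplicative containments in (i) and (ii) are read off correctly from axiom (ii) of the $SG$ definition, the isomorphism $B_n\cong F_n(B)/F_{n-1}(B)$ in (iii) is the standard argument via $B_n\cap F_{n-1}(B)=0$, and the cycle of implications in (iv) correctly exploits the uniqueness of homogeneous decompositions in $M$ (including the subtle point you flag, that directness of $\sum_n (M_n+N)/N$ is exactly what condition (b) supplies). Note that the paper itself gives no proof of this proposition --- it is recalled from \cite{lezamalatorre} with proofs explicitly omitted --- so there is no in-paper argument to compare against; your proof is the standard one that the cited source follows, and nothing in it requires the finiteness hypotheses of Definition \ref{definition17.5.4}, as you correctly observe.
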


\begin{remark}\label{remark18.1.7}
(i) According to (iv)-(b) in the previous proposition, if $N$ is a semi-graded submodule of $M$,
then $N$ can be generated by homogeneous elements; however, if $N$ is a submodule of $M$ generated
by homogeneous elements, then we can not asserts that $N$ is semi-graded.

(ii) Let $B$ be a $SG$ ring, as we saw in (ii) of the previous proposition, $B$ is
$\mathbb{N}$-filtered. Conversely, if we assume that $B$ is a $\mathbb{N}$-filtered ring with
filtration $\{F_n(B)\}_{n\geq 0}$ such that for any $n\geq 0$, $F_n(B)/F_{n-1}(B)$ is
$F_0(B)$-projective, then it is easy to prove that $B$ is $SG$ with semi-graduation $\{B_n\}_{n\geq
0}$ given by $B_0:=F_0(B)$ and $B_n$ is such that $F_{n-1}(B)\oplus B_n=F_n(B)$, $n\geq 1$.

(iii) If $B$ is a $FSG$ ring, then for every $n\geq 0$, $Gr(B)_n\cong B_n$ as $B_0$-modules.

(iv) Observe if $B$ is $FSG$ ring, then $B_0B_p=B_p$ for every $p\geq 0$, and if $M$ is finitely
semi-graded, then $B_0M_n=M_n$ for all $n\in \mathbb{Z}$.
\end{remark}

We conclude this subsection recalling one of the invariants that we will study later for finitely
semi-graded algebras. In \cite{lezamalatorre} the authors introduced the notion of generalized
Hilbert series for finitely semi-graded rings.

\begin{definition}\label{definition15.2.1}
Let $B=\bigoplus_{n\geq 0}\oplus B_n$ be a $FSG$ ring. The generalized Hilbert series of $B$ is
defined by
\begin{center}
$Gh_B(t):=\sum_{n=0}^\infty(\dim_{B_0} B_n)t^n$.
\end{center}
\end{definition}

\begin{remark}\label{remark17.5.11}
(i) Note that if $K$ is a field and $B$ is a finitely graded $K$-algebra, then the generalized
Hilbert series coincides with the usual Hilbert series, i.e., $Gh_{B}(t)=h_B(t)$.

(ii) Observe that if a $FSG$ ring $B$ has another \-se\-mi-\-gra\-dua\-tion $B=\bigoplus_{n\geq 0}
C_n$, then its generalized Hilbert series can change, i.e., the notion of generalized Hilbert
series depends on the semi-graduation, in particular on $B_0$. For example, consider the usual real
polynomial ring in two variables $B:=\mathbb{R}[x,y]$, then $Gh_{B}(t)=\frac{1}{(1-t)^2}$, but if
we view this ring as $B=(\mathbb{R}[x])[y]$ then $C_0=\mathbb{R}[x]$, its generalized Hilbert
series is $\frac{1}{1-t}$. However, in Section \ref{section3} we will introduce the semi-graded
algebras over fields and for them we will discuss the uniqueness of the Hilbert series based in a
recent paper by Bell and Zhang (\cite{ZhangJ4}).
\end{remark}

\subsection{Skew $PBW$ extensions}

As was pointed out above, finitely graded algebras over fields are examples of $FSG$ rings. In
order to present many other examples of $FSG$ rings not being necessarily graded algebras, we
recall in this subsection the notion of skew $PBW$ extension defined firstly in \cite{Gallego2}.

\begin{definition}[\cite{Gallego2}]\label{gpbwextension}
Let $R$ and $A$ be rings. We say that $A$ is a \textit{skew $PBW$ extension of $R$} $($also called
a $\sigma-PBW$ extension of $R$$)$ if the following conditions hold:
\begin{enumerate}
\item[\rm (i)]$R\subseteq A$.
\item[\rm (ii)]There exist finitely many elements $x_1,\dots ,x_n\in A$ such $A$ is a left $R$-free module with basis
\begin{center}
${\rm Mon}(A):= \{x^{\alpha}=x_1^{\alpha_1}\cdots x_n^{\alpha_n}\mid \alpha=(\alpha_1,\dots
,\alpha_n)\in \mathbb{N}^n\}$, with $\mathbb{N}:=\{0,1,2,\dots\}$.
\end{center}
The set ${\rm Mon}(A)$ is called the set of standard monomials of $A$.
\item[\rm (iii)]For every $1\leq i\leq n$ and $r\in R-\{0\}$ there exists $c_{i,r}\in R-\{0\}$ such that
\begin{equation}\label{sigmadefinicion1}
x_ir-c_{i,r}x_i\in R.
\end{equation}
\item[\rm (iv)]For every $1\leq i,j\leq n$ there exists $c_{i,j}\in R-\{0\}$ such that
\begin{equation}\label{sigmadefinicion2}
x_jx_i-c_{i,j}x_ix_j\in R+Rx_1+\cdots +Rx_n.
\end{equation}
\end{enumerate}
Under these conditions we will write $A:=\sigma(R)\langle x_1,\dots ,x_n\rangle$.
\end{definition}

\begin{example}\label{example1.10}
Many important algebras and rings coming from mathematical physics are particular examples of skew
$PBW$ extensions: Habitual ring of polynomials in several variables, Weyl algebras, enveloping
algebras of finite dimensional Lie algebras, algebra of $q$-differential operators, many important
types of Ore algebras, algebras of diffusion type, additive and multiplicative analogues of the
Weyl algebra, dispin algebra $\mathcal{U}(osp(1,2))$, quantum algebra $\mathcal{U}'(so(3,K))$,
Woronowicz algebra $\mathcal{W}_{\nu}(\mathfrak{sl}(2,K))$, Manin algebra $\mathcal{O}_q(M_2(K))$,
coordinate algebra of the quantum group $SL_q(2)$, $q$-Heisenberg algebra \textbf{H}$_n(q)$,
Hayashi algebra $W_q(J)$, differential operators on a quantum space
$D_{\textbf{q}}(S_{\textbf{q}})$, Witten's deformation of $\mathcal{U}(\mathfrak{sl}(2,K))$,
multiparameter Weyl algebra $A_n^{Q,\Gamma}(K)$, quantum symplectic space
$\mathcal{O}_q(\mathfrak{sp}(K^{2n}))$, some quadratic algebras in 3 variables, some 3-dimensional
skew polynomial algebras, particular types of Sklyanin algebras, homogenized enveloping algebra
$\mathcal{A}(\mathcal{G})$, Sridharan enveloping algebra of 3-dimensional Lie algebra
$\mathcal{G}$, among many others. For a precise definition of any of these rings and algebras see
\cite{lezamareyes1}, \cite{Reyes2}, \cite{Suarez2}, \cite{Suarez}, \cite{ReyesSuarez2}.
\end{example}

Associated to a skew $PBW$ extension $A=\sigma(R)\langle x_1,\dots ,x_n\rangle$, there are $n$
injective endomorphisms $\sigma_1,\dots,\sigma_n$ of $R$ and $\sigma_i$-derivations, as the
following proposition shows.
\begin{proposition}[\cite{Gallego2}]\label{sigmadefinition}
Let $A$ be a skew $PBW$ extension of $R$. Then, for every $1\leq i\leq n$, there exist an injective
ring endomorphism $\sigma_i:R\rightarrow R$ and a $\sigma_i$-derivation $\delta_i:R\rightarrow R$
such that
\begin{center}
$x_ir=\sigma_i(r)x_i+\delta_i(r)$,
\end{center}
for each $r\in R$.
\end{proposition}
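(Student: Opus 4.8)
The plan is to read off $\sigma_i$ and $\delta_i$ directly from condition (iii) of Definition \ref{gpbwextension} and then verify the required algebraic identities by exploiting the uniqueness of coefficients in the free $R$-module $A$. First I would fix $i$ and define the two maps as follows: for $r=0$ set $\sigma_i(0):=0$ and $\delta_i(0):=0$; for $r\in R-\{0\}$, condition (iii) provides $c_{i,r}\in R-\{0\}$ with $x_ir-c_{i,r}x_i\in R$, so I put $\sigma_i(r):=c_{i,r}$ and $\delta_i(r):=x_ir-c_{i,r}x_i\in R$. By construction the equation $x_ir=\sigma_i(r)x_i+\delta_i(r)$ holds for every $r\in R$.

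The single fact that makes everything work is that $A$ is a free left $R$-module with basis ${\rm Mon}(A)$, and in particular $1=x^{0}$ and $x_i=x^{e_i}$ are two distinct basis elements. Hence any element written in the form $ax_i+b$ with $a,b\in R$ has $a$ and $b$ uniquely determined; in particular $\sigma_i$ and $\delta_i$ are well defined, since $c_{i,r}$ is forced to equal the coefficient of $x_i$ in the basis expansion of $x_ir$.

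Next I would verify that $\sigma_i$ is a ring endomorphism and $\delta_i$ a $\sigma_i$-derivation by expanding products in two ways and matching coefficients on $\{1,x_i\}$. Applying the defining relation to $r+s$ and comparing with $x_ir+x_is$ gives additivity of both maps. Computing $x_i(rs)=(x_ir)s$, substituting the relation twice, and then collecting the coefficient of $x_i$ and the constant term yields $\sigma_i(rs)=\sigma_i(r)\sigma_i(s)$ together with $\delta_i(rs)=\sigma_i(r)\delta_i(s)+\delta_i(r)s$, which is exactly the $\sigma_i$-derivation rule. From $x_i\cdot 1=x_i=1\cdot x_i+0$ I obtain $\sigma_i(1)=1$ and $\delta_i(1)=0$.

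Finally, injectivity of $\sigma_i$ is immediate from the definition: if $r\neq 0$ then $\sigma_i(r)=c_{i,r}\in R-\{0\}$, so $\sigma_i(r)=0$ forces $r=0$. The only point requiring care, and the one I would flag as the crux, is the appeal to the uniqueness of the basis expansion; once that is secured, every identity reduces to comparing the two coefficients on the pair $\{1,x_i\}$, and no further computation of substance remains.
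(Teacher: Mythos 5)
Your proof is correct and follows essentially the same route as the proof in the cited source \cite{Gallego2}, which is where the paper defers for this proposition rather than reproving it: define $\sigma_i(r)$ and $\delta_i(r)$ as the coefficients of $x_i$ and $1$ in the unique expansion of $x_ir$ over the free left $R$-module basis ${\rm Mon}(A)$, then derive additivity, multiplicativity, the rule $\delta_i(rs)=\sigma_i(r)\delta_i(s)+\delta_i(r)s$, and injectivity (trivial kernel plus additivity) by coefficient comparison on $\{1,x_i\}$. No gaps to report.
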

A particular case of skew $PBW$ extension is when all derivations $\delta_i$ are zero. Another
interesting case is when all $\sigma_i$ are bijective and the constants $c_{ij}$ are invertible. We
recall the following definition.
\begin{definition}[\cite{Gallego2}, \cite{Suarez2}, \cite{Suarez}, \cite{ReyesSuarez2}]\label{sigmapbwderivationtype}
Let $A$ be a skew $PBW$ extension.
\begin{enumerate}
\item[\rm (a)]
$A$ is quasi-commutative if the conditions {\rm(}iii{\rm)} and {\rm(}iv{\rm)} in Definition
\ref{gpbwextension} are replaced by
\begin{enumerate}
\item[\rm (iii')]For every $1\leq i\leq n$ and $r\in R-\{0\}$ there exists $c_{i,r}\in R-\{0\}$ such that
\begin{equation}
x_ir=c_{i,r}x_i.
\end{equation}
\item[\rm (iv')]For every $1\leq i,j\leq n$ there exists $c_{i,j}\in R-\{0\}$ such that
\begin{equation}
x_jx_i=c_{i,j}x_ix_j.
\end{equation}
\end{enumerate}
\item[\rm (b)]$A$ is bijective if $\sigma_i$ is bijective for
every $1\leq i\leq n$ and $c_{i,j}$ is invertible for any $1\leq i,j\leq n$.
\item[\rm (c)]$A$ is constant if the condition  {\rm(}ii{\rm)} in Definition
\ref{gpbwextension} is replaced by: For every $1\leq i\leq n$ and $r\in R$,
\begin{equation}
x_ir=rx_i.
\end{equation}
\item[\rm (d)]$A$ is pre-commutative if the condition  {\rm(}iv{\rm)} in Definition
\ref{gpbwextension} is replaced by: For any $1\leq i,j\leq n$ there exists $c_{i,j}\in R\
\backslash\ \{0\}$ such that
\begin{equation}\label{relat.pre-comm}
x_jx_i-c_{i,j}x_ix_j\in Rx_1+\cdots +Rx_n.
\end{equation}
\item[\rm (e)]  $A$ is called {\em semi-commutative} if $A$ is quasi-commutative and constant.
\end{enumerate}
\end{definition}

\begin{remark}\label{remark1.13}
Later below we need the following classification given in \cite{Suarez2}, \cite{Suarez} and
\cite{ReyesSuarez2} of skew $PBW$ extensions of Example \ref{example1.10}. The extensions are
classified as constant (C), bijective (B), pre-commutative (P), quasi-commutative (QC) and
\-semi-\-commu\-tative (SC); in the tables the symbols $\star$ and $\checkmark$ denote negation and
affirmation, respectively:

\newpage

\begin{center}

\scriptsize{
\begin{tabular}{|l|c|c|c|c|c|}\hline
 \textbf{Skew PBW extension} &   \textbf{C} & \textbf{B} & \textbf{P} & \textbf{QC} &  \textbf{SC} \\
\hline \hline  Classical polynomial ring   & $\checkmark$ & $\checkmark$ & $\checkmark$  &
$\checkmark$ &  $\checkmark$\\ \hline
 Ore extensions  of bijective type  & $\star$ & $\checkmark$ &  $\checkmark$ & $\star$ & $\star$\\ \hline
 Weyl algebra  &  $\star$ & $\checkmark$  &  $\checkmark$ & $\star$ & $\star$ \\ \hline
 Particular  Sklyanin algebra &  $\checkmark$ & $\checkmark$ & $\checkmark$ &  $\checkmark$ &  $\checkmark$ \\ \hline
 Universal enveloping algebra of a Lie algebra  & $\checkmark$ & $\checkmark$ &   $\checkmark$ & $\star$ & $\star$\\ \hline
 Homogenized enveloping algebra $\mathcal{A}(\mathcal{G})$ & $\checkmark$ & $\checkmark$ &   $\checkmark$ & $\star$ & $\star$\\ \hline
Tensor product  & $\checkmark$ &$\checkmark$ & $\checkmark$& $\star$ &$\star$ \\ \hline
 Crossed product  & $\star$ & $\checkmark$ & $\star$ &$\star$ & $\star$\\ \hline
 Algebra of $q$-differential operators   & $\star$ & $\checkmark$ & $\checkmark$ &$\star$ & $\star$\\ \hline
 Algebra of shift operators   & $\star$ & $\checkmark$ &$\checkmark$ &$\checkmark$ & $\star$ \\ \hline
 Mixed algebra  & $\star$& $\checkmark$ &$\star$ &$\star$ &$\star$ \\ \hline
 Algebra of discrete linear systems   & $\star$&$\checkmark$ &$\checkmark$ &$\checkmark$ & $\star$\\ \hline
 Linear partial differential operators   & $\star$& $\checkmark$& $\checkmark$&$\star$ &$\star$ \\ \hline
 Linear partial shift operators   &$\star$ &$\checkmark$ &$\checkmark$ & $\checkmark$&$\star$ \\ \hline
 Algebra of linear partial difference operators   & $\star$& $\checkmark$& $\checkmark$ &$\star$ & $\star$\\ \hline
 Algebra of linear partial q-dilation operators   & $\star$ &$\checkmark$ & $\checkmark$&$\checkmark$ &$\star$ \\ \hline
 Algebra of linear partial q-differential operators   &$\star$ & $\checkmark$ & $\checkmark$ &$\star$ &$\star$ \\ \hline
 Algebras of diffusion type  & $\checkmark$ & $\checkmark$ & $\checkmark$ & $\star$ & $\star$\\ \hline
  Additive analogue of the Weyl algebra   &$\checkmark$ &$\checkmark$ & $\star$& $\star$& $\star$\\ \hline
 Multiplicative analogue of the Weyl algebra  &$\checkmark$ &$\checkmark$ &$\checkmark$ & $\checkmark$& $\checkmark$\\ \hline
 Quantum algebra $\cU'(\mathfrak{so}(3,K))$  & $\checkmark$ &$\checkmark$ &$\checkmark$ & $\star$& $\star$\\ \hline
 Dispin algebra  & $\checkmark$ & $\checkmark$ & $\checkmark$ & $\star$ & $\star$\\ \hline
 Woronowicz algebra  &$\checkmark$ &$\checkmark$ &$\checkmark$ & $\star$& $\star$ \\ \hline
 Complex algebra    & $\star$  &$\checkmark$ &$\star$  &$\star$  & $\star$ \\ \hline
 Algebra $\textbf{U}$   & $\star$  &$\checkmark$ &$\star$  &$\star$  & $\star$ \\ \hline
Manin algebra   & $\star$  &$\checkmark$ & $\checkmark$  &$\star$  & $\star$ \\ \hline
$q$-Heisenberg algebra  & $\checkmark$  &$\checkmark$ & $\checkmark$  &$\star$  & $\star$  \\
\hline
 Quantum enveloping algebra of $\mathfrak{sl}(2,\mathbb{K})$ & $\star$  &$\checkmark$ &$\star$  &$\star$  & $\star$ \\ \hline
 Hayashi's algebra  & $\star$  &$\checkmark$ &$\star$  &$\star$  & $\star$ \\ \hline
  The algebra of differential operators on a quantum space $S_q$ & $\star$ & $\checkmark$  & $\star$ & $\star$ &$\star$ \\ \hline
Witten's deformation of   $\cU(\mathfrak{sl}(2,\mathbb{K}))$& $\star$ & $\checkmark$  & $\star$ &
$\star$ &$\star$  \\ \hline
 Quantum Weyl algebra of Maltsiniotis  & $\star$ & $\checkmark$  & $\star$ & $\star$ &$\star$ \\ \hline
 Quantum Weyl algebra  & $\star$ & $\checkmark$  & $\star$ & $\star$ &$\star$  \\ \hline
 Multiparameter quantized Weyl algebra & $\star$ & $\checkmark$  & $\star$ & $\star$ &$\star$  \\ \hline
 Quantum symplectic space & $\star$ & $\checkmark$  & $\star$ & $\star$ &$\star$ \\ \hline
 Quadratic algebras in 3 variables & $\star$ & $\checkmark$  & $\star$ & $\star$ &$\star$  \\ \hline
\end{tabular}}
\end{center}

\begin{center}
\tiny{
\begin{tabular}{|c|p{8.7cm}|c|c|c|c|c|}\hline
\textbf{Cardinal} & \textbf{ 3-dimensional skew polynomial algebras} &
 \textbf{C} & \textbf{B} & \textbf{P} &  \textbf{QC}&  \textbf{SC} \\ \hline \hline
$|\{\alpha, \beta, \gamma\}|=3$ & $yz-\alpha zy=0, \ \ zx-\beta xz=0, \ \ xy-\gamma yx=0$ &
$\checkmark$  & $\checkmark$  & $\checkmark$  & $\checkmark$  & $\checkmark$  \\ \cline{1-7} &
$yz-zy=z$,\ \ $zx-\beta xz=y$,\ \ $xy-yx=x$ & $\checkmark$  & $\checkmark$  & $\checkmark$  &
$\star$  & $\star$ \\ \cline{2-7} & $yz-zy=z,\ \ zx-\beta xz=b,\ \ xy-yx=x $ & $\checkmark$ &
$\checkmark$ & $\star$  & $\star$   & $\star$ \\ \cline{2-7} $|\{\alpha, \beta, \gamma\}|=2$, &
$yz-zy=0,\ \ zx-\beta xz=y,\ \ xy-yx=0$ & $\checkmark$  & $\checkmark$  & $\checkmark$  & $\star$ &
$\star$\\ \cline{2-7} $\beta\neq \alpha=\gamma=1$ & $yz-zy=0,\ \ zx-\beta xz=b,\ \ xy-yx=0$ &
$\checkmark$ & $\checkmark$ & $\star$  & $\star$   & $\star$ \\\cline{2-7} & $yz-zy=az,\ \ zx-\beta
xz=0,\ \ xy-yx=x$ & $\checkmark$  & $\checkmark$  & $\checkmark$  & $\star$  & $\star$\\
\cline{2-7} & $ yz-zy=z,\ \ zx-\beta xz=0,\ \ xy-yx=0$ & $\checkmark$  & $\checkmark$  &
$\checkmark$  & $\star$  & $\star$\\ \cline{1-7} $|\{\alpha, \beta, \gamma\}|=2,$ & $yz-\alpha
zy=0,\ \ zx-\beta xz=y+b,\ \ xy-\alpha yx=0$ & $\checkmark$ & $\checkmark$ & $\star$  & $\star$   &
$\star$ \\ \cline{2-7} $\beta \neq \alpha = \gamma \neq 1$ & $yz-\alpha zy=0,\ \ zx-\beta xz=b,\ \
xy-\alpha yx=0$ & $\checkmark$ & $\checkmark$ & $\star$  & $\star$   & $\star$ \\ \cline{1-7}
$\alpha =\beta = \gamma \neq 1$ & $yz-\alpha zy=a_1x+b_1,\ \ zx-\alpha xz=a_2y + b_2,\ \ xy-\alpha
yx=a_3z+b_3 $ & $\checkmark$ & $\checkmark$ & $\star$  & $\star$   & $\star$ \\ \cline{1-7} &
$yz-zy=x,\ \ zx-xz=y,\ \ xy-yx =z $ & $\checkmark$  & $\checkmark$  & $\checkmark$  & $\star$  &
$\star$\\ \cline{2-7} & $yz-zy=0,\ \ zx-xz=0,\ \ xy-yx=z$ & $\checkmark$  & $\checkmark$  &
$\checkmark$  & $\star$  & $\star$ \\ \cline{2-7} $\alpha =\beta = \gamma =1$ & $yz-zy=0,\ \
zx-xz=0,\ \ xy-yx=b$ & $\checkmark$ & $\checkmark$ & $\star$  & $\star$   & $\star$\\ \cline{2-7} &
$yz-zy=-y,\ \ zx-xz=x+y,\ \ xy-yx=0$& $\checkmark$  & $\checkmark$  & $\checkmark$  & $\star$  &
$\star$ \\ \cline{2-7} & $ yz-zy=az,\ \ zx-xz=x,\ \ xy-yx=0 $ & $\checkmark$  & $\checkmark$  &
$\checkmark$  & $\star$  & $\star$\\ \cline{1-7}
\end{tabular}}
\end{center}

\begin{center}
\scriptsize{
\begin{tabular}{|c|c|c|c|c|c|c|c|c|}\hline
\multicolumn{9}{|c|}{\textbf{ Sridharan enveloping algebra of 3-dimensional Lie algebra
$\mathcal{G}$}}\\ \hline \textbf{Type} & $[x,y]$ & $[y,z]$ & $[z,x]$ & \textbf{C} & \textbf{B} &
\textbf{P} & \textbf{QC} &  \textbf{SC}\\ \hline\hline

 1 & $0$ & $0$ & $0$ & $\checkmark$  & $\checkmark$  & $\checkmark$  & $\checkmark$  & $\checkmark$   \\ \hline
 2 & $0$ & $x$ & $0$ & $\checkmark$ & $\checkmark$ & $\checkmark$  & $\star$  & $\star$ \\ \hline
 3& $x$ & $0$ & $0$ & $\checkmark$ & $\checkmark$ & $\checkmark$  & $\star$  & $\star$ \\ \hline
4 & $0$ & $\alpha y$ & $-x$ & $\checkmark$ & $\checkmark$ & $\checkmark$  & $\star$  & $\star$ \\
\hline 5 & $0$ & $y$ & $-(x+y)$ & $\checkmark$ & $\checkmark$ & $\checkmark$  & $\star$  & $\star$
\\ \hline
 6 & $z$ & $-2y$ & $-2x$ & $\checkmark$ & $\checkmark$ & $\checkmark$  & $\star$  & $\star$ \\ \hline
 7& $1$ & $0$ & $0$ & $\checkmark$ & $\checkmark$ & $\star$   & $\star$  & $\star$ \\ \hline
  8& $1$ & $x$ & $0$ & $\checkmark$ & $\checkmark$ & $\star$   & $\star$  & $\star$ \\ \hline
  9& $x$ & $1$ & $0$ &  $\checkmark$ & $\checkmark$ & $\star$   & $\star$  & $\star$ \\ \hline
   10& $1$ & $y$ & $x$ &  $\checkmark$ & $\checkmark$ & $\star$   & $\star$  & $\star$ \\ \hline
\end{tabular}}
\end{center}
\end{remark}

If $A=\sigma(R)\langle x_1,\dots,x_n\rangle$ is a skew $PBW$ extension of the ring $R$, then, as
was observed in Proposition \ref{sigmadefinition}, $A$ induces injective endomorphisms
$\sigma_k:R\to R$ and $\sigma_k$-derivations $\delta_k:R\to R$, $1\leq k\leq n$. From the
Definition \ref{gpbwextension}, there exists a unique finite set of constants $c_{ij}, d_{ij},
a_{ij}^{(k)}\in R$, $c_{ij}\neq 0$, such that
\begin{equation}\label{equation1.2.1}
x_jx_i=c_{ij}x_ix_j+a_{ij}^{(1)}x_1+\cdots+a_{ij}^{(n)}x_n+d_{ij}, \ \text{for every}\  1\leq
i<j\leq n.
\end{equation}

\begin{definition}\label{definition1.2.1}
Let $A=\sigma(R)\langle x_1,\dots,x_n\rangle$ be a skew $PBW$ extension. $R$, $n$,
$\sigma_k,\delta_k, c_{ij}$, $d_{ij}, a_{ij}^{(k)}$, with $1\leq i<j\leq n$, $1\leq k\leq n$,
defined as before, are called the parameters of $A$.
\end{definition}

Some notation will be useful in what follows.

\begin{definition}\label{1.1.6}
Let $A$ be a skew $PBW$ extension of $R$.
\begin{enumerate}
\item[\rm (i)]For $\alpha=(\alpha_1,\dots,\alpha_n)\in \mathbb{N}^n$,
$|\alpha|:=\alpha_1+\cdots+\alpha_n$.
\item[\rm (ii)]For $X=x^{\alpha}\in Mon(A)$,
$\exp(X):=\alpha$ and $\deg(X):=|\alpha|$.
\item[\rm (iii)]Let $0\neq f\in A$, $t(f)$ is the finite
set of terms that conform $f$, i.e., if $f=c_1X_1+\cdots +c_tX_t$, with $X_i\in Mon(A)$ and $c_i\in
R-\{0\}$, then $t(f):=\{c_1X_1,\dots,c_tX_t\}$.
\item[\rm (iv)]Let $f$ be as in {\rm(iii)}, then $\deg(f):=\max\{\deg(X_i)\}_{i=1}^t.$
\end{enumerate}
\end{definition}

Skew $PBW$ extensions have been enough investigated, many ring and homological properties of them
have been studied, as well as their Gröbner theory (\cite{Acosta1}, \cite{Acosta2},
\cite{lezamaore}, \cite{ArtamonovDerivations}, \cite{Gallego2}, \cite{lezama-gallego-projective},
\cite{lezamareyes1}, \cite{lezamalatorre}, \cite{LezamaHelbert}, \cite{Reyes2}, \cite{ReyesSuarez},
\cite{reyessuarez}, \cite{Venegas2}). We conclude this introductory section with some known results
about skew $PBW$ extensions and semi-graded rings that we will use in the present paper.

\begin{theorem}[\cite{lezamareyes1}]\label{filteredskew}
    Let $ A $ be an arbitrary skew $PBW$ extension of the ring $ R $. Then, $ A $ is a $\mathbb{N}$-filtered ring with filtration given by
    $$F_{m}:=
    \begin{cases}
    R, & \text{ if } m=0\\
    \{f \in A| deg(f) \leq m\}, & \text{  if } m \geq 1,
    \end{cases}$$
    and the graded ring $ Gr(A) $ is a quasi-commutative skew $ PBW $ extension of $ R
    $. If the parameters that define $A$ are as in Definition \ref{definition1.2.1}, then the
    parameters that define $Gr(A)$ are $R$, $n$, $\sigma_k, c_{ij}$, with $1\leq i<j\leq n$, $1\leq k\leq
    n$. Moreover, if $ A $ is bijective, then $ Gr(A) $ is bijective.
     \end{theorem}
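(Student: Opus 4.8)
The plan is to establish the four assertions in order: that the subgroups $F_m$ form a ring filtration, that $Gr(A)$ is again a skew $PBW$ extension, that it is quasi-commutative with the indicated parameters, and finally the bijective case. First I would check the filtration axioms. Ascending containment $F_m\subseteq F_{m+1}$ and exhaustion $A=\bigcup_m F_m$ are immediate from the definition of $\deg$ in Definition \ref{1.1.6}, since every element of $A$ is a finite $R$-linear combination of standard monomials, and $1\in R=F_0$. Note also that $F_m$ is exactly the left $R$-span of $\{x^\alpha:|\alpha|\le m\}$, since $\deg(\sum r_\alpha x^\alpha)=\max\{|\alpha|:r_\alpha\neq 0\}$. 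The only substantive point is the multiplicative property $F_m\cdot F_k\subseteq F_{m+k}$. Because relation (iii) lets us move a coefficient to the left of a monomial at the cost of strictly lower-degree terms ($x_ir=\sigma_i(r)x_i+\delta_i(r)$, and iterating, $x^\alpha s=\sigma^\alpha(s)x^\alpha+(\text{lower degree})$), it suffices to prove the key lemma: for any two standard monomials $x^\alpha,x^\beta\in\mathrm{Mon}(A)$,
\[
x^\alpha x^\beta = c_{\alpha,\beta}\,x^{\alpha+\beta}+p_{\alpha,\beta},
\]
where $c_{\alpha,\beta}\in R$ and $p_{\alpha,\beta}\in A$ satisfies $\deg(p_{\alpha,\beta})<|\alpha|+|\beta|$ (allowing $p_{\alpha,\beta}=0$). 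Granting the lemma, $\deg(x^\alpha x^\beta)\le|\alpha|+|\beta|\le m+k$, and combining with the coefficient-moving remark gives $F_mF_k\subseteq F_{m+k}$.

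I would prove the lemma by induction on the number of inversions needed to bring the concatenated word $x_1^{\alpha_1}\cdots x_n^{\alpha_n}x_1^{\beta_1}\cdots x_n^{\beta_n}$ into nondecreasing-index (standard) order. The base case is when $\alpha+\beta$ is already standard, whence $x^\alpha x^\beta=x^{\alpha+\beta}$. For the inductive step I would locate an adjacent pair $x_jx_i$ with $j>i$ and rewrite it via the defining relation (\ref{equation1.2.1}), $x_jx_i=c_{ij}x_ix_j+\sum_k a_{ij}^{(k)}x_k+d_{ij}$: the leading summand $c_{ij}x_ix_j$ has the same total degree but strictly fewer inversions, while the remaining summands have strictly smaller degree. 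Moving the newly introduced coefficients $c_{ij},a_{ij}^{(k)},d_{ij}$ leftward past the preceding variables via (iii) again only produces lower-degree corrections, since each application of a $\delta_i$ lowers the degree. Applying the induction hypothesis to the lower-inversion leading piece and absorbing all corrections into $p_{\alpha,\beta}$ yields the stated form.

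Next I would identify $Gr(A)=\bigoplus_{m\ge 0}F_m/F_{m-1}$ (with $F_{-1}:=0$). Since $\{x^\alpha:|\alpha|\le m\}$ is an $R$-basis of $F_m$, the classes $\overline{x^\alpha}$ with $|\alpha|=m$ form an $R$-basis of $Gr(A)_m$; writing $\overline{x_i}$ for the class of $x_i$ in $F_1/F_0$, the product rule of the associated graded ring gives $\overline{x^\alpha}=\overline{x_1}^{\alpha_1}\cdots\overline{x_n}^{\alpha_n}$, so $\mathrm{Mon}(Gr(A))=\{\overline{x_1}^{\alpha_1}\cdots\overline{x_n}^{\alpha_n}\}$ is a left $R$-basis of $Gr(A)$, which is axiom (ii) of Definition \ref{gpbwextension} together with $R\subseteq Gr(A)_0=R$. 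Reducing the relations of $A$ modulo lower filtration pieces then computes the structure: from $x_ir=\sigma_i(r)x_i+\delta_i(r)$ with $\delta_i(r)\in F_0$ one gets $\overline{x_i}\,r=\sigma_i(r)\,\overline{x_i}$, i.e.\ (iii$'$) with $c_{i,r}=\sigma_i(r)$; and from (\ref{equation1.2.1}), whose lower-order summands lie in $F_1$, one gets $\overline{x_j}\,\overline{x_i}=c_{ij}\,\overline{x_i}\,\overline{x_j}$, i.e.\ (iv$'$). Hence $Gr(A)$ is a quasi-commutative skew $PBW$ extension whose associated endomorphisms are again $\sigma_1,\dots,\sigma_n$ (by Proposition \ref{sigmadefinition}) and whose only nonzero constants are the $c_{ij}$, with all derivations and all $a_{ij}^{(k)},d_{ij}$ now zero, exactly the parameters claimed. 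The bijective case is then immediate, since $Gr(A)$ shares the data $\sigma_k$ and $c_{ij}$ with $A$: if every $\sigma_k$ is bijective and every $c_{ij}$ is invertible, the same holds in $Gr(A)$.

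I expect the main obstacle to be the key lemma, specifically the bookkeeping ensuring that each rewriting step strictly decreases a well-founded quantity (inversions, with degree as a tiebreaker) so that the induction terminates and the error term $p_{\alpha,\beta}$ remains in strictly lower degree. Care is needed because both swapping a pair of variables and subsequently normalizing the freshly produced coefficients generate correction terms, and one must verify that none of these ever restores the degree to $|\alpha|+|\beta|$ in a monomial other than the distinguished leading term $c_{\alpha,\beta}x^{\alpha+\beta}$.
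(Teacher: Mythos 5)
The paper itself does not prove this theorem; it quotes it from \cite{lezamareyes1}, and the argument there (resting on the monomial-product lemma going back to \cite{Gallego2}) is exactly the one you outline: prove $x^\alpha x^\beta=c_{\alpha,\beta}x^{\alpha+\beta}+p_{\alpha,\beta}$ with $\deg(p_{\alpha,\beta})<|\alpha|+|\beta|$ together with the coefficient-moving identity $x^\alpha s=\sigma^\alpha(s)x^\alpha+(\text{lower degree})$, deduce $F_mF_k\subseteq F_{m+k}$, observe that the classes $\overline{x^\alpha}$ with $|\alpha|=m$ form a left $R$-basis of $Gr(A)_m$, and read off the relations (iii$'$) and (iv$'$) with $c_{i,r}=\sigma_i(r)$ and the constants $c_{ij}$ by reducing the defining relations of $A$ modulo lower filtration terms; the bijective case is then immediate. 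So your route coincides with the cited proof.

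There is, however, one step that fails as literally written: the well-founded measure in your key lemma. You propose induction on ``inversions, with degree as a tiebreaker,'' i.e.\ inversions first. But a correction term can have strictly \emph{more} inversions than the word it replaces: if $n\geq 4$ and $a_{23}^{(4)}\neq 0$, rewriting the pair $x_3x_2$ at the front of the word $x_3x_2x_3x_3x_3$ (one inversion) produces, via relation (\ref{equation1.2.1}), the correction word $x_4x_3x_3x_3$ (three inversions); your measure increases on this term, so the induction as stated is unsound. The repair is to reverse the priority: order lexicographically by total degree first, inversions second (equivalently, strong induction on $|\alpha|+|\beta|$ with an inner induction on inversions). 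Then, exactly as you yourself observe, the distinguished leading piece has the same degree and strictly fewer inversions, while every correction has strictly smaller degree, so the measure decreases on all terms produced. Note also that the outer induction on degree is what actually licenses ``absorbing all corrections into $p_{\alpha,\beta}$'': those corrections are words with interior coefficients that must themselves be rewritten in the standard basis, and the statement that this rewriting never raises their degree is precisely the inductive hypothesis in lower degree, run simultaneously with the coefficient-moving claim. With this reordering your proof is complete and agrees with \cite{lezamareyes1}.
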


\begin{proposition}[\cite{lezamalatorre}]\label{proposition16.5.7}
{\rm (i)} Any $\mathbb{N}$-graded ring is $SG$.

{\rm (ii)} Let $K$ be a field. Any finitely graded $K$-algebra is a $FSG$ ring.

{\rm (iii)} Any skew $PBW$ extension is a $FSG$ ring.
\end{proposition}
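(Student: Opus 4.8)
The plan is to treat the three parts in increasing order of difficulty, since (i) is essentially formal, (ii) reduces to (i) together with local finiteness, and (iii) carries the real content.

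For part (i), I would simply check the three axioms in the definition of a semi-graded ring against the hypotheses on an $\mathbb{N}$-graded ring $B=\bigoplus_{n\geq 0}B_n$. The decomposition axiom and the condition $1\in B_0$ hold verbatim, while the multiplicativity axiom $B_mB_n\subseteq B_0\oplus\cdots\oplus B_{m+n}$ follows at once from the stronger graded identity $B_mB_n\subseteq B_{m+n}$. No obstruction arises here. For part (ii), let $A=\bigoplus_{n\geq 0}A_n$ be a finitely graded $K$-algebra. Since $A$ is in particular $\mathbb{N}$-graded, part (i) already yields that $A$ is $SG$, settling condition (i) of Definition \ref{definition17.5.4}. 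Condition (ii) of that definition is exactly the hypothesis that $A$ is finitely generated as a $K$-algebra: choosing algebra generators, which may be taken homogeneous, supplies the required $x_1,\dots,x_n$, and together with $A_0=K$ they generate $A$. For condition (iii) I would use that $A_0=K$ is a field, so every $A_n$ is automatically a free $A_0$-module; finite rank follows from local finiteness, $\dim_K A_n<\infty$, which in turn follows from the finite generation by homogeneous elements, as each $A_n$ is spanned by the finitely many monomials of degree $n$ in the generators.

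The substance is in part (iii). Given a skew $PBW$ extension $A=\sigma(R)\langle x_1,\dots,x_n\rangle$, I would propose the semi-graduation $B_0:=R$ and, for $m\geq 1$, $B_m:=\bigoplus_{|\alpha|=m}Rx^{\alpha}$, the free $R$-submodule spanned by the standard monomials of degree exactly $m$. Since $A$ is left $R$-free on $\mathrm{Mon}(A)$ by Definition \ref{gpbwextension}, grouping the basis monomials by degree gives $A=\bigoplus_{m\geq 0}B_m$, which is axiom (i) of $SG$; axiom (iii) holds because $1=x^{0}\in R=B_0$. The key point, and the main obstacle, is axiom (ii): one must show $B_mB_p\subseteq B_0\oplus\cdots\oplus B_{m+p}$. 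Here I would invoke Theorem \ref{filteredskew}, which identifies the degree filtration $F_m=\{f:\deg(f)\leq m\}$ as an $\mathbb{N}$-filtration. Since $B_0\oplus\cdots\oplus B_m=F_m$, the filtration is multiplicative, $F_mF_p\subseteq F_{m+p}$, and $B_m\subseteq F_m$, $B_p\subseteq F_p$, the desired inclusion follows. Concretely, this multiplicativity reflects that reordering a product $x^{\alpha}x^{\beta}$ via the relations (\ref{sigmadefinicion1}), (\ref{sigmadefinicion2}) and (\ref{equation1.2.1}) produces the leading monomial $x^{\alpha+\beta}$ of degree $m+p$ together with correction terms of strictly smaller degree.

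Finally, to upgrade $SG$ to $FSG$ I would verify the remaining conditions of Definition \ref{definition17.5.4}: condition (ii) is immediate, as $R=B_0$ together with $x_1,\dots,x_n$ generates $A$, since every element is an $R$-combination of standard monomials, that is, of products of the $x_i$; and condition (iii) holds because $B_m$ is $R$-free on the finite set $\{x^{\alpha}:|\alpha|=m\}$, hence a free $B_0$-module of finite rank. I expect the only place requiring genuine care to be the multiplicativity of the filtration underlying axiom (ii) of $SG$, which is precisely what Theorem \ref{filteredskew} is designed to deliver.
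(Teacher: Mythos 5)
Your proof is correct. The paper does not reproduce a proof of this proposition (it is quoted from \cite{lezamalatorre}), but your argument is the natural one and fits the paper's own framework exactly: direct verification of the $SG$ and $FSG$ axioms, with the explicit semi-graduation $B_0:=R$, $B_m:=\bigoplus_{|\alpha|=m}Rx^{\alpha}$ for a skew $PBW$ extension, and the key multiplicativity $B_mB_p\subseteq B_0\oplus\cdots\oplus B_{m+p}$ supplied by the filtration of Theorem \ref{filteredskew}, which is precisely the mechanism the paper itself points to in Remark \ref{remark18.1.7}(ii).
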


For skew $PBW$ extensions the generalized Hilbert series has been computed explicitly.

\begin{theorem}[\cite{lezamalatorre}]\label{17.5.10}
Let $A=\sigma(R)\langle x_1,\dots,x_n\rangle$ be an arbitrary skew $PBW$ extension. Then,
\begin{equation}\label{equ17.2.3}
Gh_A(t)=\frac{1}{(1-t)^n}.
\end{equation}
\end{theorem}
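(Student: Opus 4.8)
The plan is to read the semi-graduation of $A$ directly off its standard monomial basis and then reduce the statement to a classical generating-function identity.

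First I would identify the homogeneous components. By Theorem \ref{filteredskew}, $A$ is $\mathbb{N}$-filtered by $F_0=R$ and $F_m=\{f\in A:\deg(f)\leq m\}$ for $m\geq 1$; combining this with Remark \ref{remark18.1.7}(ii) and Proposition \ref{proposition16.5.7}(iii), the induced semi-graduation of $A$ has $A_0=R$ and $A_d$ a complement of $F_{d-1}$ in $F_d$. Since $A$ is a left $R$-free module with basis ${\rm Mon}(A)=\{x^{\alpha}:\alpha\in\mathbb{N}^n\}$ and $F_d$ is exactly the free left $R$-module spanned by the monomials $x^{\alpha}$ with $|\alpha|\leq d$, I can take
$$A_d=\bigoplus_{|\alpha|=d}Rx^{\alpha},$$
the free left $R$-module on the standard monomials of degree exactly $d$. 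Here I would also note that this decomposition genuinely satisfies the semi-graded axioms: condition (iii) is $1=x^0\in R=A_0$, while condition (ii), namely $A_dA_e\subseteq A_0\oplus\cdots\oplus A_{d+e}$, is precisely the content of the defining relations (\ref{sigmadefinicion1}), (\ref{sigmadefinicion2}) and (\ref{equation1.2.1}), which guarantee that commuting an element of $R$ past an $x_i$, or reordering a product $x_jx_i$, only produces terms of no larger degree.

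Next I would compute the dimensions. Because $A_0=R$ and $A_d$ is free over $R$ on $\{x^{\alpha}:|\alpha|=d\}$, we get $\dim_R A_d=|\{\alpha\in\mathbb{N}^n:\alpha_1+\cdots+\alpha_n=d\}|$, the number of weak compositions of $d$ into $n$ parts, which by a stars-and-bars count equals $\binom{d+n-1}{n-1}$. Plugging this into Definition \ref{definition15.2.1},
$$Gh_A(t)=\sum_{d=0}^{\infty}\binom{d+n-1}{n-1}t^d.$$

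Finally I would sum the series. Writing $\frac{1}{(1-t)^n}=\bigl(\sum_{k\geq 0}t^k\bigr)^n$ and reading off the coefficient of $t^d$ as the number of solutions of $k_1+\cdots+k_n=d$ in non-negative integers recovers exactly $\binom{d+n-1}{n-1}$; hence $Gh_A(t)=\frac{1}{(1-t)^n}$, as claimed. I do not expect a genuine obstacle here: the only point needing care is the verification that the $A_d$ above really give a semi-graduation with each $A_d$ free of the stated rank over $A_0=R$, so that the dimensions are counted over $R$ rather than over a field; once that is in place, the conclusion is just the standard negative-binomial expansion.
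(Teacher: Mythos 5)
Your proof is correct. Note that the paper itself offers no proof of this statement --- it is quoted from \cite{lezamalatorre} --- but your argument is exactly the standard one underlying that reference: read the semi-graduation $A_d=\bigoplus_{|\alpha|=d}Rx^{\alpha}$ off the left $R$-basis ${\rm Mon}(A)$, count $\dim_R A_d=\binom{d+n-1}{n-1}$ by stars and bars, and sum the negative binomial series. One small streamlining: the semi-graduation axiom $A_dA_e\subseteq A_0\oplus\cdots\oplus A_{d+e}$ follows at once from Theorem \ref{filteredskew}, since $A_dA_e\subseteq F_dF_e\subseteq F_{d+e}=A_0\oplus\cdots\oplus A_{d+e}$, so you need not re-examine the defining relations (\ref{sigmadefinicion1}), (\ref{sigmadefinicion2}) at that step.
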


\begin{remark}\label{remark1.19}
(i) Note that the class of $SG$ rings includes properly the class of $\mathbb{N}$-graded rings: In
fact, the enveloping algebra of any finite-dimensional Lie algebra proves this statement. This
example proves also that the class of $FSG$ rings includes properly the class of finitely graded
algebras.

(ii) The class of $FSG$ rings includes properly the class of skew $PBW$ extensions: For this
consider the Artin-Schelter regular algebra of global dimension $3$ defined by the following
relations:
\begin{center}
$yx=xy+z^2$, $zy=yz+x^2$, $zx=xz+y^2$.
\end{center}
Observe that this algebra is a particular case of a Sklyanin algebra which in general are defined
by the following relations:
\begin{center}
$ayx+bxy+cz^2=0$, $azy+byz+cx^2=0$, $axz+bzx+cy^2=0$, $a,b,c\in K$.
\end{center}
\end{remark}

\section{Finitely semi-graded algebras}

\noindent In the present section we define the finitely semi-graded algebras. In all of examples
that we will study, in particular, the semi-graded Koszul algebras that we will introduce later,
they are additionally finitely presented. Let us recall first this notion. Let $B$ be a finitely
generated $K$-algebra, so there exist finitely many elements $g_1,\dots,g_n\in B$ that generate $B$
as $K$-algebra and we have the $K$-algebra homomorphism $f:K\{ x_1,\dots,x_n\}\to B$, with
$f(x_i):=g_i$, $1\leq i\leq n$; let $I:=\ker(f)$, then we get a \textit{presentation} of $B$:
\begin{equation}\label{equ17.1.1}
B\cong K\{ x_1,\dots,x_n\}/I.
\end{equation}
Recall that $B$ is said to be \textit{finitely presented} if $I$ is finitely generated.

\subsection{Definition}

In the previous section we defined the finitely semi-graded rings and we observed that they
generalize finitely graded algebras over fields and skew $PBW$ extensions. In this section we will
be concentrated in some particular class of this type of rings which satisfy some other extra
natural conditions.

\begin{definition}\label{definition16.1.1}
Let $B$ be a $K$-algebra. We say that $B$ is finitely semi-graded $(FSG)$ if the following
conditions hold:
\begin{enumerate}
\item[\rm (i)]$B$ is a $FSG$ ring with semi-graduation $B=\bigoplus_{p\geq 0}B_p$.
\item[\rm (ii)]For every $p,q\geq 1$, $B_pB_q\subseteq B_1\oplus \cdots \oplus B_{p+q}$.
\item[\rm (iii)]$B$ is connected, i.e., $B_0=K$.
\item[\rm (iv)]$B$ is generated in degree $1$.
\end{enumerate}
\end{definition}

\begin{remark}\label{17.1.2}
Let $B$ be a $FSG$ $K$-algebra;

(i) Since $B$ is locally finite and $B$ is finitely generated in degree $1$, then any $K$-basis of
$B_1$ generates $B$ as $K$-algebra.

(ii) The canonical projection $\varepsilon:B\to K$ is a homomorphism of $K$-algebras, called the
\textit{augmentation map}, with $\ker(\varepsilon)=\bigoplus_{n\geq 1}B_n$. Therefore, the class of
$FSG$ algebras is contained in the class of \textit{augmented algebras}, i.e., algebras with
augmentation (see \cite{Phan}), however, as we will see, a semi-graduation is a nice tool for
defining some invariants useful for the study of the algebra. $B_{\geq 1}:=\bigoplus_{n\geq 1}B_n$
is called the \textit{augmentation ideal}. Thus, $K$ becomes into a $B$-bimodule with products
given by $b\cdot \lambda:=b_0\lambda$, $\lambda\cdot b:=\lambda b_0$, with $b\in B, \lambda\in K$
and $b_0$ is the homogeneous component of $b$ of degree zero.

(iii) It is well known that $B$ is finitely graded if and only if the ideal $I$ in
(\ref{equ17.1.1}) is homogeneous (\cite{Rogalski}). In general, finitely semi-graded algebras do
not need to be finitely presented. Any finitely graded algebra generated in degree $1$ is $FSG$,
but $B:=K\{x,y\}/\langle xy-x\rangle$ with semi-graduation $B_n:=_K\langle y^kx^{n-k}|0\leq k\leq
n\rangle$, $n\geq 0$, is a $FSG$ algebra and it is not finitely graded generated in degree $1$.
Thus, the class of $FSG$ algebras includes properly all finitely graded algebras generated in
degree $1$.

(iv) Any $FSG$ algebra is $\mathbb{N}$-filtered (see Proposition \ref{proposition17.5.5}), but note
that the Weyl algebra $A_1(K)=K\{t,x\}/\langle xt-tx-1\rangle$ is $\mathbb{N}$-filtered but not
$FSG$, i.e., the class of $FSG$ algebras do not coincide with the class of $\mathbb{N}$-filtered
algebras.
\end{remark}

\begin{proposition}
Let $B$ be a $FSG$ algebra over $K$. Then $B_{\geq 1}$ is the unique two-sided maximal ideal of $B$
semi-graded as left ideal.
\end{proposition}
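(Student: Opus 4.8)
The plan is to verify in turn that $B_{\geq 1}$ is a two-sided ideal, that it is maximal, that it is semi-graded as a left ideal, and then to establish the uniqueness, which is the real content of the statement.

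First I would appeal to Remark~\ref{17.1.2}(ii): the augmentation map $\varepsilon\colon B\to K$ is a homomorphism of $K$-algebras with kernel $B_{\geq 1}$. As the kernel of an algebra homomorphism, $B_{\geq 1}$ is at once a two-sided ideal, and the induced isomorphism $B/B_{\geq 1}\cong K$ exhibits the quotient as a field, hence a simple ring. By the correspondence theorem the only two-sided ideals of $B$ containing $B_{\geq 1}$ are $B_{\geq 1}$ and $B$, so $B_{\geq 1}$ is a maximal two-sided ideal. That it is semi-graded as a left ideal is immediate, since $B_{\geq 1}=\bigoplus_{n\geq 1}B_n$ has the $B_n$, $n\geq 1$, as its homogeneous components.

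For uniqueness, let $I$ be any proper two-sided ideal that is semi-graded as a left ideal, so that $I=\bigoplus_{n\geq 0}I_n$ with $I_n=I\cap B_n$. The key step is to control the degree-zero piece $I_0=I\cap B_0=I\cap K$: since $I$ is a left ideal and $B_0=K$, for $\lambda\in K$ and $x\in I_0$ the element $\lambda x$ lies both in $I$ and in $K$, hence in $I_0$; thus $I_0$ is an ideal of the field $K$ and is therefore $0$ or $K$. If $I_0=K$ then $1\in I$ and $I=B$, contradicting properness, so $I_0=0$ and consequently $I\subseteq\bigoplus_{n\geq 1}B_n=B_{\geq 1}$. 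This shows that every proper two-sided ideal that is semi-graded as a left ideal is contained in $B_{\geq 1}$, so $B_{\geq 1}$ is the unique maximal one among them; and if $I$ is moreover assumed maximal as a two-sided ideal, then $I\subseteq B_{\geq 1}\subsetneq B$ forces $I=B_{\geq 1}$.

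I do not anticipate a genuine obstacle; the one point that deserves care is the reduction $I_0\in\{0,K\}$. It rests on connectedness $B_0=K$ together with the fact that the degree-zero component of a product of two positive-degree homogeneous elements vanishes, which is exactly what condition~(ii) of Definition~\ref{definition16.1.1} ensures and what makes $\varepsilon$ multiplicative in the first place. Everything else is a formal consequence of $B/B_{\geq 1}\cong K$ being a field.
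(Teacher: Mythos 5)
Your proposal is correct and follows essentially the same route as the paper's proof: both identify $B_{\geq 1}$ as the kernel of the augmentation map from Remark~\ref{17.1.2} (hence a two-sided maximal ideal, semi-graded as a left ideal), and both establish uniqueness by showing that any proper two-sided ideal $I$ semi-graded as a left ideal satisfies $I\cap B_0=0$, hence $I\subseteq B_{\geq 1}$, with maximality forcing equality. Your added detail on why $I\cap K\in\{0,K\}$ and on the role of condition~(ii) of Definition~\ref{definition16.1.1} merely makes explicit what the paper leaves implicit.
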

\begin{proof}
From Remark \ref{17.1.2} we have that $B_{\geq 1}$ is a two-sided maximal ideal of $B$, and of
course, semi-graded as left ideal. Let $I$ be another two-sided maximal ideal of $B$ semi-graded as
left ideal; since $I$ is proper, $I\cap B_0=I\cap K=0$; let $x\in I$, then $x=x_0+x_1+\cdots+x_n$,
with $x_i\in B_i$, $1\leq i\leq n$, but since $I$ is semi-graded, $x_i\in I$ for every $i$, so
$x_0=0$, and hence, $x\in B_{\geq 1}$. Thus, $I\subseteq B_{\geq 1}$, and hence, $I=B_{\geq 1}$.
\end{proof}

\subsection{Examples of $FSG$ algebras}\label{section17.9}

\noindent In this subsection we present a wide list of $FSG$ algebras, many of them, within the
class of skew $PBW$ extensions. For the explicit set of generators and relations for these algebras
see \cite{lezamareyes1}, \cite{Reyes2}, \cite{Suarez2}, \cite{Suarez}, \cite{ReyesSuarez2}.

\begin{example}[Skew $PBW$ extensions that are $FSG$ algebras]\label{example17.9.4}
Note that a skew $PBW$ extension of the field $K$ is a $FSG$ algebra if and only if it is constant
and pre-commutative. Thus, we have:

(i) By the classification presented in the tables of Remark \ref{remark1.13}, the following skew
$PBW$ extensions of the field $K$ are $FSG$ algebras: The classical polynomial algebra; the
particular Sklyanin algebra; the universal enveloping algebra of a Lie algebra; the quantum algebra
$\mathcal{U}'(so(3,K))$; the dispin algebra; the Woronowicz algebra; the $q$-Heisenberg algebra;
nine types 3-dimensional skew polynomial algebras; six types of Sridharan enveloping algebra of
3-dimensional Lie algebras.

(ii) Many skew $PBW$ extensions in the first table of Remark \ref{remark1.13} are marked as non
constant, however, reconsidering the ring of coefficients, some of them can be also viewed as skew
$PBW$ extensions of the base field $K$; this way, they are $FSG$ algebras over $K$: The algebra of
shift operators; the algebra of discrete linear systems; the multiplicative analogue of the Weyl
algebra; the algebra of linear partial shift operators; the algebra of linear partial q-dilation
operators.

(iii) In the class of skew quantum polynomials (see \cite{lezamareyes1}) the
\-mul\-ti-\-pa\-ra\-meter quantum affine $n$-space is another example of skew $PBW$ extension of
the field $K$ that is a $FSG$ (actually finitely graded) algebra. In particular, this is the case
for the quantum plane.

(iv) The following skew $PBW$ extensions of the field $K$ are $FSG$ but not finitely graded: The
niversal enveloping algebra of a Lie algebra; the quantum algebra $\mathcal{U}'(so(3,K))$; the
dispin algebra; the Woronowicz algebra; the $q$-Heisenberg algebra; eight of the nine types
3-dimensional skew polynomial algebras; five of the six types of Sridharan enveloping algebra of
3-dimensional Lie algebras.
\end{example}

\begin{example}[$FSG$ algebras that are not skew $PBW$ extensions of $K$]\label{example16.2.5}
The following algebras are $FSG$ but not skew $PBW$ extensions of the base field $K$ (however, in
every example below the algebra is a skew $PBW$ extension of some other subring):

(i) The \textit{Jordan plane} $A$ is the $K$-algebra generated by $x,y$ with relation $yx = xy +
x^2$, so $A=K\{ x, y\}/ \langle yx-xy-x^2\rangle$. $A$ is not a skew $PBW$ extension of $K$, but of
course, it is a $FSG$ algebra over $K$, actually, it is a finitely graded algebra over $K$ (observe
that $A$ can be viewed as a skew $PBW$ extension of $K[x]$, i.e., $A=\sigma(K[x])\langle
y\rangle$).

(ii) The $K$-algebra in Example 1.18 of \cite{Rogalski} is not a skew PBW extension of $K$:
\begin{center}
$A=K\{ x,y,z\}/\langle z^2-xy-yx,zx-xz,zy-yz\rangle$.
\end{center}
However, $A$ is a $FSG$ algebra, actually, it is a finitely graded algebra over $K$ (note that $A$
can be viewed as a skew $PBW$ extension of $K[z]$: $A=\sigma(K[z])\langle x, y\rangle$).

(iii) The following examples are similar to the previous: The homogenized enveloping algebra
$\mathcal{A}(\mathcal{G})$; algebras of diffusion type; the Manin algebra, or more generally, the
algebra $\mathcal{O}_q(M_n(K))$ of quantum matrices; the complex algebra
$V_q(\mathfrak{sl}_3(\mathbb{C}))$; the algebra $\textbf{U}$; the Witten's deformation of
$\mathcal{U}(\mathfrak{sl}(2,K))$; the quantum symplectic space
$\mathcal{O}_q(\mathfrak{sp}(K^{2n}))$; some quadratic algebras in $3$ variables.
\end{example}

\begin{example}[$FSG$ algebras that are not skew $PBW$ extensions]\label{example16.2.6}
The following $FSG$ algebras are not skew $PBW$ extensions:

(i) Consider the Sklyanin algebra with $c\neq 0$ (see Remark \ref{remark1.19}), then $S$ is not a
skew PBW extension, but clearly it is a $FSG$ algebra over $K$.

(ii) The finitely graded $K$-algebra in Example 1.17 of \cite{Rogalski}:
\begin{center}
$B=K\{ x,y\}/\langle yx^2-x^2y,y^2x-xy^2\rangle$.
\end{center}
(iii) Any \textit{monomial quadratic algebra}
\begin{center}
$B=K\{x_1,\dots,x_n\}/\langle x_ix_j, (i,j)\in S\rangle$,
\end{center}
with $S$ any finite set of pairs of indices (\cite{Polishchuk}).

(iv) $B=K\{w,x,y,u\}/\langle yu, ux-xu,uw\rangle$ (\cite{Phan1}).

(v) $B=K\{x,y\}/\langle x^2y,y^2x\rangle$ (\cite{Phan1}).

(vi) $B=K\{x,y\}/\langle x^2-xy,yx,y^3\rangle$ (\cite{Cassidy2}).

(vii) $B=K\{w,x,y,z\}/\langle z^2y^2,y^3x^2,x^2w,zy^3x\rangle$ (\cite{Cassidy2}).

(viii) $B=K\{x,y,z\}/\langle x^4,yx^3,x^3z\rangle$ (\cite{Cassidy2}).

(ix) $B=K\{x,y,z\}/\langle xz-zx,yz-zy,x^3z,y^4+xz^3\rangle$ (\cite{Cassidy2}).

(x) $B=K\{x,y,z,w,g\}/\langle y^2z,zx^2+gw^2,y^2w^2,xg-gx,yg-gy,wg-gw,zg-gz\rangle$
(\cite{Cassidy2}).

(xi) $B=K\{x,y\}/\langle x^2y-yx^2,xy^3-y^3x\rangle$ (\cite{Cassidy2}).

(xii) $B=K\{x,y\}/\langle xyx,xy^2x,y^3\rangle$ (\cite{Cassidy2}).
\end{example}

\section{Some invariants associated to $FSG$ algebras}\label{section3}

\noindent Now we will study some invariants associated to finitely semi-graded algebras: The
Hilbert series, the Yoneda algebra and the Poincaré series. The topics that we will consider here
for $FSG$ algebras extend some well known results on finitely graded algebras.

\subsection{The Hilbert series}

In Definition \ref{definition15.2.1} we presented the notion of generalized Hilbert series of a
$FSG$ ring. We will prove next that if $B$ is a $FSG$ algebra over a field $K$, then $Gh_B(t)$ is
well-defined, i.e., it does not depend on the semi-graduation (compare with Remark
\ref{remark17.5.11}). This theorem was proved recently by Jason Bell and James J. Zhang in
\cite{ZhangJ4} for connected graded algebras finitely generated in degree 1, we will apply the
Bell-Zhang result to our semi-graded algebras.

\begin{theorem}[\cite{ZhangJ4}]\label{theorem16.1.8}
Let $A$ and $B$ be connected graded algebras finitely generated in degree $1$. Then, $A\cong B$ as
$K$-algebras if and only if $A\cong B$ as graded algebras.
\end{theorem}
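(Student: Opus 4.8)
The nontrivial implication is that an isomorphism of ungraded $K$-algebras forces a graded isomorphism, the converse being immediate since a graded isomorphism is in particular an algebra isomorphism. The plan is to recover the grading intrinsically from the augmentation-ideal filtration. For a connected graded algebra $A=\bigoplus_{n\ge 0}A_n$ generated in degree $1$, the augmentation ideal $\mathfrak{m}_A:=A_{\ge 1}$ satisfies $\mathfrak{m}_A^{\,n}=A_{\ge n}$, whence $\mathfrak{m}_A^{\,n}/\mathfrak{m}_A^{\,n+1}\cong A_n$ compatibly with multiplication; thus the associated graded ring reconstructs the algebra, $\mathrm{gr}_{\mathfrak{m}_A}(A)\cong A$ as graded algebras, and likewise $\mathrm{gr}_{\mathfrak{m}_B}(B)\cong B$.

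Now let $\phi\colon A\to B$ be an isomorphism of $K$-algebras and set $J:=\phi^{-1}(\mathfrak{m}_B)$. Then $J$ is the kernel of the character $\varepsilon_B\circ\phi\colon A\to K$, so $A/J\cong K$, and since $\phi(J^n)=\mathfrak{m}_B^{\,n}$ the map $\phi$ induces a graded isomorphism $\mathrm{gr}_J(A)\cong\mathrm{gr}_{\mathfrak{m}_B}(B)\cong B$. Hence the whole statement reduces to the single assertion that $\mathrm{gr}_J(A)\cong A$ as graded algebras; combined with the previous paragraph this yields $A\cong\mathrm{gr}_J(A)\cong B$ graded.

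The key is to recognise that $J$ is not an arbitrary augmentation ideal. Writing a character $\varepsilon'\colon A\to K$ through its restriction $\lambda=\varepsilon'|_{A_1}$, its kernel is $\mathfrak{n}=\langle x-\lambda(x):x\in A_1\rangle$, so the classes of these generators span $\mathfrak{n}/\mathfrak{n}^2$ and $\dim_K\mathfrak{n}/\mathfrak{n}^2\le\dim_K A_1$, with equality for $\mathfrak{m}_A$ itself. Thus $\dim_K A_1=\max_{\mathfrak n}\dim_K\mathfrak{n}/\mathfrak{n}^2$ is an invariant of the ungraded algebra; in particular $\dim_K A_1=\dim_K B_1$, and since $\phi(J)=\mathfrak{m}_B$ we get $\dim_K J/J^2=\dim_K B_1=\dim_K A_1$, so $J$ attains this maximum. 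In other words $J$ behaves like the ``vertex'': the elements $x_i-\lambda_i$, for a basis $x_1,\dots,x_m$ of $A_1$, form a basis of $J/J^2$, so $\mathrm{gr}_J(A)$ is again connected, generated in degree $1$, with degree-one component of the same dimension $m$ as $A_1$.

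I expect the main obstacle to be the remaining step: for such a maximizing augmentation ideal $J$, proving $\mathrm{gr}_J(A)\cong A$ as graded algebras. The naive attempt---transporting the grading along the coordinate change $x_i\mapsto x_i-\lambda_i$ that carries $\mathfrak{m}_A$ to $J$---fails, because this translation does not preserve the homogeneous relation ideal of $A$; indeed for a generic augmentation ideal $\mathrm{gr}_J(A)$ is genuinely smaller than $A$, as the example $A=K[x,y]/\langle xy\rangle$ with $J=\langle x-a,y\rangle$, $a\ne 0$, shows, where $\mathrm{gr}_J(A)\cong K[t]$. One must instead compare the $\mathfrak{m}_A$-adic and $J$-adic filtrations at the level of Hilbert series: the maximality of $\dim_K J/J^2$ forces the linear parts of all relations to vanish at the defining character, which is exactly what is needed to guarantee $\dim_K J^n/J^{n+1}=\dim_K A_n$ for every $n$ and then to upgrade the resulting numerical coincidence to a graded isomorphism. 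This Hilbert-series comparison, valid precisely because $A$ is finitely generated in degree $1$ and locally finite, is the technical heart of the Bell--Zhang theorem and the step I would expect to require the most care.
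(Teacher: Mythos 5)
First, a point of comparison: the paper does not prove this statement at all. Theorem \ref{theorem16.1.8} is imported verbatim from Bell--Zhang \cite{ZhangJ4}, so your attempt can only be judged on its own merits, and on those merits it has a genuine gap. The correct parts are: the reduction to showing $\mathrm{gr}_J(A)\cong A$ as graded algebras for $J=\phi^{-1}(\mathfrak{m}_B)$; the observation that $\dim_K A_1=\max_{\mathfrak{n}}\dim_K\mathfrak{n}/\mathfrak{n}^2$ over all augmentation ideals $\mathfrak{n}$, whence $\dim_K J/J^2=\dim_K A_1$; and the equivalence of this maximality with the vanishing at $\lambda$ of the linear parts of all relations. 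The fatal step is the claim that this vanishing ``is exactly what is needed to guarantee $\dim_K J^n/J^{n+1}=\dim_K A_n$ for every $n$.'' It is necessary but not sufficient. Take $A=K[x,y,z]/\langle x^2y-z^3\rangle$, a connected graded algebra generated in degree $1$, and the character $\lambda=(0,1,0)$. All three partial derivatives of $x^2y-z^3$ vanish at $\lambda$, so $\dim_K J/J^2=3=\dim_K A_1$; but in the translated variables the relation reads $u_x^2+u_x^2u_y-u_z^3$, whose lowest-degree form is $u_x^2$, and since the relation ideal is principal and the associated graded of the polynomial ring is a domain, $\mathrm{gr}_J(A)\cong K[x,y,z]/\langle x^2\rangle$. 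Its Hilbert series is $1+3t+5t^2+\cdots$, while $h_A(t)=1+3t+6t^2+\cdots$, so already $\dim_K J^2/J^3=5\neq 6=\dim_K A_2$. (Your own example $K[x,y]/\langle xy\rangle$ never tests this claim, because there the maximality fails; this one does, and the claim fails with it.) Note also that in this example $\mathrm{gr}_J(A)$ is again connected, generated in degree $1$, with the same dimension in degree $1$, so nothing you have established rules it out.

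The root cause is that your reduction discards exactly the hypothesis that makes the theorem true. What you actually know about $J$ is that $\mathrm{gr}_J(A)\cong B\cong A$ as ungraded algebras; you replace this by its numerical shadow $\dim_K J/J^2=\dim_K A_1$, and the example above shows the shadow is too weak. If instead you keep the full hypothesis, then the assertion ``$\mathrm{gr}_J(A)\cong A$ as graded algebras'' is precisely the theorem you are trying to prove, applied to the pair $(A,\mathrm{gr}_J(A))$, so the argument becomes circular. There is also a second, independent gap at the end: even if one had $\dim_K J^n/J^{n+1}=\dim_K A_n$ for all $n$, an equality of Hilbert series never upgrades by itself to a graded isomorphism; for instance $K\{x,y\}/\langle xy\rangle$ and $K[x,y]$ are both connected, generated in degree one, with Hilbert series $(1-t)^{-2}$, yet are not isomorphic even as ungraded algebras (one is a domain, the other is not). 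Any correct proof --- in particular the one in \cite{ZhangJ4} --- must use the isomorphism $\phi$ itself to manufacture the graded isomorphism, not merely dimension counts extracted from it.
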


\begin{corollary}[\cite{ZhangJ4}]\label{corollary16.1.9}
Let $A$ be a connected graded algebra finitely generated in degree $1$. If $A$ has two graduations
$A=\bigoplus_{n\geq 0}A_{n}=\bigoplus_{n\geq 0}B_{n}$, then there exists an algebra automorphism
$\phi: A\to A$ such that $\phi(A_n)=B_n$ for every $n\geq 0$. In particular, $\dim_K A_n=\dim_K
B_n$ for every $n\geq 0$, and the Hilbert series of $A$ is well-defined. Moreover, if
$Aut(A)=Aut_{\textbf{gr}}(A)$, then $A_n=B_n$ for every $n\geq 0$.
\end{corollary}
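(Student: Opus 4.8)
The plan is to apply Theorem \ref{theorem16.1.8} to the two graded algebras obtained by equipping the \emph{same} underlying $K$-algebra $A$ with each of its two graduations. Write $A_1:=(A,\{A_n\}_{n\geq 0})$ and $A_2:=(A,\{B_n\}_{n\geq 0})$; by hypothesis both are connected graded algebras finitely generated in degree $1$. As $K$-algebras they are literally the same object $A$, so the identity map witnesses a $K$-algebra isomorphism $A_1\cong A_2$.

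First I would invoke Theorem \ref{theorem16.1.8}: since $A_1\cong A_2$ as $K$-algebras, there exists an isomorphism of \emph{graded} algebras $\phi\colon A_1\to A_2$. The point to emphasize is that $\phi$ is an algebra isomorphism whose source and target have the same underlying algebra $A$, so $\phi$ is in fact an element of $\mathrm{Aut}(A)$; and being graded for the graduation $\{A_n\}$ on the source and $\{B_n\}$ on the target means precisely that $\phi(A_n)=B_n$ for every $n\geq 0$. This produces the desired automorphism.

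Since $\phi$ is $K$-linear and bijective, it restricts to a $K$-vector space isomorphism $A_n\to B_n$, whence $\dim_K A_n=\dim_K B_n$ for all $n\geq 0$. Because the (generalized) Hilbert series of Definition \ref{definition15.2.1} depends only on the dimensions $\dim_K A_n$, it takes the same value for both graduations, so the Hilbert series is well-defined. For the last assertion, suppose $\mathrm{Aut}(A)=\mathrm{Aut}_{\mathbf{gr}}(A)$, where the latter denotes the automorphisms respecting the graduation $\{A_n\}$. Then the automorphism $\phi$ constructed above satisfies $\phi(A_n)=A_n$ for every $n$; combined with $\phi(A_n)=B_n$ this forces $A_n=B_n$ for all $n\geq 0$.

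The step I expect to require the most care is the identification of the abstract graded isomorphism furnished by Theorem \ref{theorem16.1.8} with an honest automorphism of $A$, together with the correct bookkeeping of which graduation plays the role of source and which the target; once this is set up, the dimension count and the final rigidity statement are immediate.
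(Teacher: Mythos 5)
Your proof is correct and is exactly the intended derivation: the paper states this corollary without proof (citing Bell--Zhang), and it follows from Theorem \ref{theorem16.1.8} precisely as you argue --- view the two graduations as two graded algebras sharing the same underlying algebra, feed the identity map into the theorem to obtain a graded isomorphism $\phi$, and read off the automorphism, the dimension equalities, and the rigidity statement under $Aut(A)=Aut_{\textbf{gr}}(A)$. Your reading of the hypothesis (that \emph{both} graduations make $A$ connected and finitely generated in degree $1$) is also the correct one, since otherwise the conclusion fails.
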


We will prove that the generalized Hilbert series of $FSG$ algebras is well-defined.

\begin{proposition}\label{proposition19.3.13}
If $B$ is a $FSG$ algebra, then $Gr(B)$ is a connected graded algebra finitely generated in degree
$1$.
\end{proposition}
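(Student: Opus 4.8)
The plan is to verify, for the associated graded ring $Gr(B)=\bigoplus_{n\geq 0}F_n(B)/F_{n-1}(B)$ attached to the standard filtration $F_n(B)=B_0\oplus\cdots\oplus B_n$ of Proposition \ref{proposition17.5.5}, the three defining properties of a connected graded algebra finitely generated in degree $1$. Since $Gr(B)$ is $\mathbb{N}$-graded by construction, the real content splits into (a) connectedness, (b) finiteness of the degree-one piece, and (c) generation in degree $1$.

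Parts (a) and (b) are immediate. Connectedness follows because $Gr(B)_0=F_0(B)/F_{-1}(B)=B_0=K$ by Definition \ref{definition16.1.1}(iii). For the degree-one piece, Proposition \ref{proposition17.5.5}(iii) (see also Remark \ref{remark18.1.7}(iii)) gives an isomorphism $Gr(B)_n\cong B_n$ of $B_0$-modules for every $n$; in particular $Gr(B)_1\cong B_1$, which is a free $K$-module of finite rank by the $FSG$-ring axiom, so $\dim_K Gr(B)_1<\infty$. Thus only (c) requires work.

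For (c) I would first record how the multiplication of $Gr(B)$ reads under the identification $Gr(B)_n\cong B_n$: writing $\pi_n$ for the projection onto the degree-$n$ homogeneous component, the product of the classes of $a\in B_p$ and $b\in B_q$ is the class of $\pi_{p+q}(ab)\in B_{p+q}$, which is well defined precisely because Definition \ref{definition16.1.1}(ii) forces $ab\in B_1\oplus\cdots\oplus B_{p+q}$. Iterating (ii), a product of $n$ elements of $B_1$ lies in $B_1\oplus\cdots\oplus B_n$, so its image in $Gr(B)$ is its degree-$n$ component and coincides with the corresponding product in $(Gr(B)_1)^n$. Since $B$ is generated in degree $1$, a $K$-basis of $B_1$ generates $B$ as a $K$-algebra (Remark \ref{17.1.2}(i)); projecting to $B_n$ then shows that $B_n$ is spanned by the degree-$n$ components of products of such basis elements. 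The aim is to upgrade this to the statement that products of \emph{exactly} $n$ such elements already suffice, i.e. $Gr(B)_n=(Gr(B)_1)^n$, equivalently $Gr(B)_n=Gr(B)_1\cdot Gr(B)_{n-1}$ for all $n\geq 1$.

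I would carry this out by induction on $n$, the clean equivalent formulation being that the standard filtration is generated by $F_1(B)$, namely $F_n(B)=F_1(B)^n$ for all $n$ (the inclusion $F_1(B)^n\subseteq F_n(B)$ being automatic from (ii)). Using that $B$ is generated in degree $1$, the augmentation ideal satisfies $B_{\geq 1}=B_1B$, and one reduces $Gr(B)_n=Gr(B)_1\,Gr(B)_{n-1}$ to the identity $\pi_n(B_1B_{n-1})=B_n$; the multiplication rule (ii) ensures that among the summands $B_1B_k$ only those with $k\geq n-1$ reach degree $n$. The main obstacle, and the step that genuinely uses the semi-graded hypotheses, is to show that the contributions of the higher factors $B_1B_k$ with $k\geq n$ can be absorbed into $\pi_n(B_1B_{n-1})$, i.e. that passing to the associated graded produces no new degree-$n$ generators arising from longer products that drop in degree. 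Here the exclusion of the $B_0$-summand in Definition \ref{definition16.1.1}(ii) is essential, since it is exactly what prevents products of positive-degree homogeneous elements from collapsing uncontrollably into lower filtration levels; I expect this to be the technical heart of the argument, after which (a)--(c) together yield that $Gr(B)$ is connected graded and finitely generated in degree $1$.
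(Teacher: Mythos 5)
Your parts (a) and (b) are correct and match what the paper needs; the problem is part (c), which you never actually prove. You reduce generation in degree one to the claim that the degree-$n$ components of the products $B_1B_k$ with $k\geq n$ can be absorbed into $\pi_n(B_1B_{n-1})$ modulo $F_{n-1}(B)$, and then you stop, announcing this absorption step as ``the technical heart of the argument.'' A proof whose acknowledged heart is missing is not a proof: as written, the proposal only establishes that $Gr(B)$ is connected, graded, and locally finite in degree one. For comparison, the paper's entire proof is a one-line appeal to $Gr(B)_n\cong B_n$ from Proposition \ref{proposition17.5.5}(iii), so it silently passes over exactly the point you flagged.

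Moreover, the missing step is not a routine verification that merely got deferred: under the literal reading of Definition \ref{definition16.1.1}(iv) given in Remark \ref{17.1.2}(i) --- namely that $B_1$ generates $B$ as a $K$-algebra --- the absorption claim is false. Take $B=K\{x,y\}/\langle xy,\,yx,\,y^2\rangle$, which has $K$-basis $\{1,y\}\cup\{x^m\}_{m\geq 1}$, and semi-grade it by $B_0=K$, $B_1=Kx\oplus Ky$, and $B_n=Kx^{2n-2}\oplus Kx^{2n-1}$ for $n\geq 2$. All axioms of Definition \ref{definition16.1.1} hold (since $y$ annihilates every element of positive degree, one checks $B_pB_q\subseteq B_{p+q-1}\oplus B_{p+q}$ for all $p,q\geq 1$), and $x,y\in B_1$ generate $B$; yet $Gr(B)_1\cdot Gr(B)_1=K\,\overline{x^2}$ is one-dimensional while $Gr(B)_2\cong B_2$ is two-dimensional, so $Gr(B)$ is not generated in degree one. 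Consequently your step (c) can be completed only if condition (iv) is read in the stronger, semi-graduation-compatible sense that $B_n\subseteq B_1B_{n-1}+F_{n-1}(B)$ for all $n$ (equivalently, $F_n(B)=F_1(B)^n$), which is what actually holds in all of the paper's examples --- for skew $PBW$ extensions, $B_n$ is spanned by monomials that are literally products of $n$ elements of degree one. Under that reading your ``technical heart'' becomes trivial and the proposition is immediate, which is in effect how the paper argues. So you correctly isolated the one genuine subtlety, but you neither resolved it nor observed that, with the definitions as stated, it cannot be resolved.
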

\begin{proof}
This is a direct consequence of part (iii) of Proposition \ref{proposition17.5.5}.
\end{proof}

\begin{theorem}\label{theorem16.1.12}
Let $B$ and $C$ be $FSG$ algebras over the field $K$. If $\phi:B\to C$ is a homogeneous isomorphism
of $K$-algebras, then $Gr(B)\cong Gr(C)$ as graded algebras.
\end{theorem}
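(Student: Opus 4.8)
The plan is to show that $\phi$ is not merely homogeneous but in fact respects the standard $\mathbb{N}$-filtration bijectively, and then to pass to the associated graded rings. The first step is to upgrade the homogeneity condition $\phi(B_n)\subseteq C_n$ to the equality $\phi(B_n)=C_n$ for every $n\geq 0$. This follows from bijectivity together with the directness of the two decompositions: given $c\in C_n$, write $c=\phi(b)$ with $b=\sum_m b_m$, $b_m\in B_m$; since $\phi(b_m)\in C_m$ and $C=\bigoplus_m C_m$ is a direct sum, comparing homogeneous components forces $\phi(b_m)=0$ for $m\neq n$, whence $b_m=0$ by injectivity, so $b=b_n\in B_n$ and $c=\phi(b_n)\in\phi(B_n)$. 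In particular $\phi^{-1}$ is homogeneous as well.

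Next I would use the standard $\mathbb{N}$-filtration $F_n(B)=B_0\oplus\cdots\oplus B_n$ from Proposition \ref{proposition17.5.5}(ii). The equalities $\phi(B_i)=C_i$ give at once $\phi(F_n(B))=\sum_{i=0}^n\phi(B_i)=\sum_{i=0}^n C_i=F_n(C)$ for all $n$; that is, $\phi$ is an isomorphism of $\mathbb{N}$-filtered $K$-algebras, and $\phi^{-1}$ is filtered too, with $\phi^{-1}(F_n(C))=F_n(B)$.

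Then I would construct the induced map on associated graded rings. For $x\in F_n(B)$ denote by $\overline{x}$ its class in $Gr(B)_n=F_n(B)/F_{n-1}(B)$, and define $Gr(\phi)(\overline{x}):=\overline{\phi(x)}\in Gr(C)_n$. This is well-defined because $\phi(F_{n-1}(B))=F_{n-1}(C)$, it is graded by construction, and it is multiplicative since the product in $Gr(B)$ is induced from that of $B$: for $x\in F_n(B)$ and $y\in F_m(B)$ one has $\overline{x}\,\overline{y}=\overline{xy}$, whence $Gr(\phi)(\overline{xy})=\overline{\phi(x)\phi(y)}=\overline{\phi(x)}\,\overline{\phi(y)}$. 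Applying the same recipe to $\phi^{-1}$ yields a two-sided inverse $Gr(\phi^{-1})$, so $Gr(\phi)$ is a graded algebra isomorphism $Gr(B)\cong Gr(C)$.

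The steps are all routine once the first one is in place; the only point requiring care is the passage from the inclusion $\phi(B_n)\subseteq C_n$ to equality, which is exactly what makes $\phi$ a filtered isomorphism rather than a mere filtered map. No genuine obstacle arises, since by Proposition \ref{proposition19.3.13} both $Gr(B)$ and $Gr(C)$ are already known to be connected graded algebras generated in degree $1$, so $Gr(\phi)$ only needs to be verified to be a graded homomorphism and a bijection.
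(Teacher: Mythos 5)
Your proof is correct, but it takes a genuinely different route from the paper's. The paper reduces the statement to the Bell--Zhang isomorphism lemma: using Proposition \ref{proposition19.3.13}, both $Gr(B)$ and $Gr(C)$ are connected graded algebras finitely generated in degree one, so by Theorem \ref{theorem16.1.8} it suffices to exhibit an isomorphism of \emph{ungraded} $K$-algebras; the paper then builds such a map componentwise through the identifications $Gr(B)_n\cong B_n$ and $Gr(C)_n\cong C_n$, checks bijectivity via $\phi^{-1}$, and asserts multiplicativity. You instead bypass Bell--Zhang entirely: you first upgrade $\phi(B_n)\subseteq C_n$ to $\phi(B_n)=C_n$ (a clean argument using directness of the decomposition plus injectivity, which the paper leaves implicit inside its ``obvious'' multiplicativity claim), conclude that $\phi$ carries $F_n(B)$ onto $F_n(C)$, and then invoke the standard functoriality of the associated graded construction for strictly filtered isomorphisms to get a graded isomorphism $Gr(\phi)$ directly. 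Your argument is more elementary and actually more general---it works for any homogeneous isomorphism of semi-graded rings, with no use of connectedness, generation in degree one, or any deep external theorem---whereas the paper's proof is tied to the $FSG$ hypotheses precisely because it routes through Theorem \ref{theorem16.1.8}. One small remark: your closing appeal to Proposition \ref{proposition19.3.13} is vestigial; nothing in your construction requires knowing that $Gr(B)$ and $Gr(C)$ are connected or generated in degree one, so that sentence can simply be deleted.
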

\begin{proof}
From the previous proposition we know that $Gr(B)$ and $Gr(C)$ are connected graded algebras
finitely generated in degree $1$; according to Theorem \ref{theorem16.1.8} we only have to show
that $Gr(B)$ and $Gr(C)$ are isomorphic as $K$-algebras. For every $n\geq 0$ we have the
homomorphism of $K$-vector spaces $\widetilde{\phi_n}:Gr(B)_n\to Gr(C)$, $b_n\mapsto c_n$, with
$\phi(b_n):=c_n$ (observe that $Gr(B)_n\cong B_n$ and $Gr(C)_n\cong C_n$ as $K$-vector spaces);
from this we obtain a homomorphism of $K$-vector spaces $\widetilde{\phi}:Gr(B)\to Gr(C)$ such that
$\widetilde{\phi}\circ \mu_n=\widetilde{\phi_n}$, for every $n\geq 0$, where $\mu_n:Gr(B)_n\to
Gr(B)$ is the canonical injection. Considering $\varphi:=\phi^{-1}$ we get a homomorphism of
$K$-vector spaces $\widetilde{\varphi}:Gr(C)\to Gr(B)$ such that $\widetilde{\varphi}\circ
\nu_n=\widetilde{\varphi_n}$, for every $n\geq 0$, where $\upsilon_n:Gr(C)_n\to Gr(C)$ is the
canonical injection. But observe that $\widetilde{\phi}\circ \widetilde{\varphi}=i_{Gr(C)}$ and
$\widetilde{\varphi}\circ \widetilde{\phi}=i_{Gr(B)}$. In fact,
$\widetilde{\varphi}\widetilde{\phi}(b_n)=
\widetilde{\varphi}\widetilde{\phi}(\mu_n(b_n))=\widetilde{\varphi}\widetilde{\phi_n}(b_n)=
\widetilde{\varphi}(c_n)=\widetilde{\varphi}\upsilon_n(c_n)=
\widetilde{\varphi_n}(c_n)=\phi^{-1}(c_n)=b_n$. In a similar way we can prove the first identity.
It is obvious that $\widetilde{\phi}$ is multiplicative.
\end{proof}

\begin{corollary}\label{corollary3.10}
Let $B$ be a $FSG$ algebra. If $B$ has two semi-graduations $A=\bigoplus_{n\geq
0}B_{n}=\bigoplus_{n\geq 0}C_{n}$, then $\dim_K B_n=\dim_K C_n$ for every $n\geq 0$, and the
generalized Hilbert series of $B$ is well-defined. Moreover, $Gh_B(t)=h_{Gr(B)}(t)$.
\end{corollary}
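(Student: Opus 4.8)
The plan is to reduce Corollary~\ref{corollary3.10} to the two results that have already been established immediately before it, namely Theorem~\ref{theorem16.1.12} (a homogeneous isomorphism of $FSG$ algebras induces a graded isomorphism of associated graded algebras) and Corollary~\ref{corollary16.1.9} (the Hilbert series of a connected graded algebra finitely generated in degree $1$ does not depend on the chosen graduation). The key observation is that a single algebra $B$ carrying two semi-graduations $B=\bigoplus_{n\geq 0}B_n=\bigoplus_{n\geq 0}C_n$ gives rise to two associated graded algebras, and I must compare their graded pieces dimension by dimension.

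First I would apply Proposition~\ref{proposition19.3.13} to each semi-graduation: writing $B^{(1)}$ for $B$ equipped with $\{B_n\}$ and $B^{(2)}$ for $B$ equipped with $\{C_n\}$, both $Gr(B^{(1)})$ and $Gr(B^{(2)})$ are connected graded algebras finitely generated in degree $1$. By part~(iii) of Proposition~\ref{proposition17.5.5} (see also Remark~\ref{remark18.1.7}(iii)) we have the isomorphisms of $K$-vector spaces $Gr(B^{(1)})_n\cong B_n$ and $Gr(B^{(2)})_n\cong C_n$ for every $n\geq 0$. Hence $\dim_K Gr(B^{(1)})_n=\dim_K B_n$ and $\dim_K Gr(B^{(2)})_n=\dim_K C_n$, so it suffices to show that $Gr(B^{(1)})$ and $Gr(B^{(2)})$ have the same Hilbert series.

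Next I would take the identity map $\mathrm{id}:B\to B$. Although it is certainly an isomorphism of $K$-algebras, it need not be homogeneous from $B^{(1)}$ to $B^{(2)}$, so Theorem~\ref{theorem16.1.12} does not apply directly to it. The cleaner route is therefore to invoke Theorem~\ref{theorem16.1.8} (the Bell--Zhang theorem) through Corollary~\ref{corollary16.1.9}: the identity gives an algebra isomorphism $Gr(B^{(1)})\cong Gr(B^{(2)})$ merely as $K$-algebras, because both are presented from the same underlying algebra $B$ and are connected, finitely generated in degree $1$; then by Theorem~\ref{theorem16.1.8} they are isomorphic as graded algebras, whence $\dim_K Gr(B^{(1)})_n=\dim_K Gr(B^{(2)})_n$ for all $n$. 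Combining this with the vector-space isomorphisms of the previous step yields $\dim_K B_n=\dim_K C_n$ for every $n\geq 0$, which is exactly the asserted independence of the generalized Hilbert series from the semi-graduation.

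Finally, the displayed equality $Gh_B(t)=h_{Gr(B)}(t)$ is essentially a matter of unwinding definitions: by Definition~\ref{definition15.2.1} one has $Gh_B(t)=\sum_{n\geq 0}(\dim_{B_0}B_n)t^n$, and since $B_0=K$ by connectedness (Definition~\ref{definition16.1.1}(iii)), this is $\sum_{n\geq 0}(\dim_K B_n)t^n$; on the other hand $h_{Gr(B)}(t)=\sum_{n\geq 0}(\dim_K Gr(B)_n)t^n$, and the two coincide term by term because $Gr(B)_n\cong B_n$ as $K$-vector spaces. The main obstacle I anticipate is the one flagged above: arguing that the two associated graded algebras are isomorphic as $K$-algebras. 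The subtlety is that the identity on $B$ does not respect the two semi-graduations, so one cannot simply transport it to the graded level; the essential input is that Bell--Zhang guarantees a graded isomorphism from an abstract algebra isomorphism, and one must set up the comparison so that this theorem becomes applicable to the two graded algebras built from the same $B$.
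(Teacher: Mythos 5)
You follow the same overall route as the paper: pass to the associated graded algebras of the two semi-graduations, compare them through the Bell--Zhang theorem, and transfer dimensions back via Proposition \ref{proposition17.5.5}(iii); your treatment of the final identity $Gh_B(t)=h_{Gr(B)}(t)$ also matches the paper's. Moreover, you correctly flag the delicate point: the paper's own proof consists precisely in applying Theorem \ref{theorem16.1.12} to the identity map $i_B$, and, as you observe, the identity is not homogeneous from $(B,\{B_n\})$ to $(B,\{C_n\})$ --- indeed homogeneity of $i_B$ would mean $B_n\subseteq C_n$ for all $n$, which forces $B_n=C_n$ and makes the statement trivial. So your diagnosis of where the difficulty sits is sharper than the paper's own write-up.

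However, your proposed repair is a genuine gap, not a fix; it restates the same missing step. The claim that the identity ``gives an algebra isomorphism $Gr(B^{(1)})\cong Gr(B^{(2)})$ merely as $K$-algebras, because both are presented from the same underlying algebra $B$'' is not an argument: the associated graded algebra is built from a filtration, not from $B$ alone, and a map of filtered algebras induces a map on the associated graded level only when it respects the filtrations. The identity respects them only if $F^{(1)}_n\subseteq F^{(2)}_n$ for every $n$, which is exactly what is unknown; it already fails for two graduations of the free algebra $K\{x,y\}$ related by the automorphism $x\mapsto x$, $y\mapsto y+x^2$. For honest graduations one can still route through $B$ itself, since then $Gr_i(B)\cong B$ canonically; but for genuinely semi-graded structures $Gr_i(B)\not\cong B$ in general (think of $\cU(\mathcal{G})$ versus its polynomial associated graded algebra), so no abstract isomorphism is available to feed into Theorem \ref{theorem16.1.8} --- Bell--Zhang upgrades an algebra isomorphism to a graded one; it cannot create the isomorphism. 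The gap is not merely formal: on $B=K[x]$ the collection $C_0:=K$, $C_n:=Kx^{2n-1}\oplus Kx^{2n}$ $(n\geq 1)$ satisfies every condition of Definition \ref{definition16.1.1} as literally stated (note $x\in C_1$ generates $B$), yet its associated graded algebra is $K[u,v]/\langle u^2\rangle$, not isomorphic to $K[x]$, the associated graded algebra of the standard graduation, and $\dim_K C_1=2\neq 1=\dim_K B_1$. So an argument along these lines --- yours, or the paper's as written --- cannot close without an additional input comparing the two filtrations, or a strengthening of condition (iv) of Definition \ref{definition16.1.1}.
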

\begin{proof}
We consider the identical isomorphism $i_B: B\to B$, by Theorem \ref{theorem16.1.12}, there exists
an isomorphism of graded algebras $\phi:Gr_1(B)\to Gr_2(B)$, where $Gr_1(B)$ is the graded algebra
associated to the semi-graduation $\{B_n\}_{n\geq 0}$ and $Gr_2(B)$ is the graded algebra
associated to $\{C_n\}_{n\geq 0}$; from the proof of Corollary \ref{corollary16.1.9} we know that
$\dim_K (Gr_1(B)_n)=\dim_K (Gr_2(B)_n)$ for every $n\geq 0$, but from the part (iii) of Proposition
\ref{proposition17.5.5}, $Gr_1(B)_n\cong B_n$ and $Gr_2(B)_n\cong C_n$, moreover, these
isomorphisms are $K$-linear, so $\dim_K B_n=\dim_K C_n$ for every $n\geq 0$.
\end{proof}

\begin{corollary}\label{corollary16.3.16}
Each of the algebras presented in Examples \ref{example17.9.4}, \ref{example16.2.5} and
\ref{example16.2.6} have generalized Hilbert series well-defined. In addition, let
$A=\sigma(K)\langle x_1,\dots,x_n\rangle$ be a skew $PBW$ extension of the field $K$; if $A$ is a
$FSG$ algebra, then the generalized Hilbert series is well-defined and given by
\begin{equation*}
Gh_A(t)=\frac{1}{(1-t)^n}.
\end{equation*}
\end{corollary}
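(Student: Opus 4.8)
The plan is to apply the two preceding results to each family of examples and then specialize to skew $PBW$ extensions. The statement has two logically separate parts. The first part asserts that every algebra in Examples \ref{example17.9.4}, \ref{example16.2.5} and \ref{example16.2.6} has a well-defined generalized Hilbert series. The second part gives an explicit formula for the generalized Hilbert series of a skew $PBW$ extension $A=\sigma(K)\langle x_1,\dots,x_n\rangle$ that happens to be $FSG$.

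For the first part, I would argue as follows. Each algebra in the three referenced examples is (as established in those examples) a $FSG$ algebra over the field $K$. Corollary \ref{corollary3.10} states precisely that if $B$ is a $FSG$ algebra, then its generalized Hilbert series is well-defined (independent of the chosen semi-graduation). Hence the conclusion is immediate: it suffices to invoke Corollary \ref{corollary3.10} once for each listed algebra, since membership in the class of $FSG$ algebras is exactly the hypothesis required. No computation is involved beyond recalling that these examples were already shown to be $FSG$.

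For the second part, I would combine Corollary \ref{corollary3.10} with the explicit computation already recorded in Theorem \ref{17.5.10}. First, since $A$ is assumed to be a $FSG$ algebra, Corollary \ref{corollary3.10} guarantees that $Gh_A(t)$ is well-defined. Next, Theorem \ref{17.5.10} computes the generalized Hilbert series of an arbitrary skew $PBW$ extension $A=\sigma(R)\langle x_1,\dots,x_n\rangle$ to be $1/(1-t)^n$; applying this with $R=K$ gives
\begin{equation*}
Gh_A(t)=\frac{1}{(1-t)^n}.
\end{equation*}
The only subtlety worth a remark is the compatibility of the two semi-graduations in play: Theorem \ref{17.5.10} computes $Gh_A(t)$ relative to the standard filtration/semi-graduation of the skew $PBW$ extension, in which $\dim_K A_n$ counts standard monomials of total degree $n$, while Corollary \ref{corollary3.10} assures us this value is independent of the semi-graduation chosen. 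Since both results are already available, the proof is essentially a citation.

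I do not expect any genuine obstacle here, as this corollary is purely a synthesis of earlier results. The one point requiring mild care is simply ensuring that the skew $PBW$ extension's canonical semi-graduation (the one underlying Theorem \ref{17.5.10}) indeed makes $A$ a $FSG$ algebra in the sense of Definition \ref{definition16.1.1}, so that Corollary \ref{corollary3.10} applies; but this was already arranged by the hypothesis that $A$ is $FSG$ together with Proposition \ref{proposition16.5.7}(iii). Thus the main work is bookkeeping: matching each example to its $FSG$ status and then reading off the formula from Theorem \ref{17.5.10}.
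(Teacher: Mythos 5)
Your proposal is correct and follows exactly the paper's own argument: the paper proves this corollary as a ``direct consequence'' of Corollary \ref{corollary3.10} (well-definedness of $Gh_B(t)$ for $FSG$ algebras) combined with Theorem \ref{17.5.10} (the formula $Gh_A(t)=\frac{1}{(1-t)^n}$ for skew $PBW$ extensions), which is precisely your synthesis. Your additional remark on the compatibility of semi-graduations is a reasonable elaboration of the same citation-based argument, not a departure from it.
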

\begin{proof}
Direct consequence of the previous corollary and Theorem \ref{17.5.10}.
\end{proof}

\begin{example}\label{example16.3.7}
In this example we show that the condition (iv) in Definition \ref{definition16.1.1} is necessary
in order to the generalized Hilbert series of $FSG$ algebras be well-defined. Let $\mathcal{L}$ be
the 3-dimensional (Heisenberg) Lie algebra that has $K$-basis $\{ x, y, z\}$ with Lie bracket
\begin{center}
$[x,y]=z$, $[x,z]=0$, $[y,z]=0$.
\end{center}
The universal enveloping algebra $\mathcal{U}(\mathcal{L})$ is connected graded with $\deg x= \deg
y=1$, $\deg z=2$. With this grading, the homogeneous component of degree 1 of
$\mathcal{U}(\mathcal{L})$ is $Kx+ Ky$. Thus, $\mathcal{U}(\mathcal{L})$ is not generated in degree
1, i.e., with this grading, $\mathcal{U}(\mathcal{L})$ can not be viewed as $FSG$ algebra. In this
case the generalized Hilbert series is
\begin{center}
$\frac{1}{(1-t)^2 (1-t^2)}$.
\end{center}
On the other hand, $\mathcal{U}(\mathcal{L})$ is $FSG$ by setting $\deg x= \deg y= \deg z=1$.
According to Corollary \ref{corollary16.3.16}, in this case the generalized Hilbert series is
\begin{center}
$\frac{1}{(1-t)^3}$.
\end{center}
\end{example}

\subsection{The Yoneda algebra}

The collection ${\rm SGR}-B$ of semi-graded modules over $B$ is an abelian category, where the
morphisms are the homogeneous $B$-homomorphisms; $K$ is an object of this category with the trivial
semi-graduation given by $K_0:=K$ and $K_n:=0$ for $n\neq 0$. We can associate to $B$ the
\textit{Yoneda algebra} defined by
\begin{equation}\label{equation18.3.5}
E(B):=\bigoplus_{i\geq 0}Ext_B^i(K,K);
\end{equation}
recall that in any abelian category the $Ext_B^i(K,K)$ groups can be computed either by projective
resolutions of $K$ or by extensions of $K$. Here we will take in account both equivalent
interpretations; the first one will be used in the proof of Theorem \ref{corollary18.3.20}. For the
second interpretation (see \cite{Weibel} ), the groups $Ext_B^i(K,K)$ are defined by equivalence
classes of exact sequences of finite length with semi-graded $B$-modules and homogeneous
$B$-homomorphisms from $K$ to $K$:
\begin{equation*}
\xi: 0 \rightarrow K\rightarrow X_i\rightarrow \cdots \rightarrow X_1\rightarrow K\rightarrow 0;
\end{equation*}
the addition in $Ext_B^i(K,K)$ is the \textit{Baer sum} (see \cite{Weibel}, Section 3.4):

\begin{equation*}
\xi: 0 \rightarrow K\rightarrow X_i\rightarrow \cdots \rightarrow X_1\rightarrow K\rightarrow 0,
\end{equation*}
\begin{equation*}
\chi: 0 \rightarrow K\rightarrow X_i'\rightarrow \cdots \rightarrow X_1'\rightarrow K\rightarrow 0,
\end{equation*}
\begin{equation*}
[\xi]\boxplus[\chi]: 0 \rightarrow K\rightarrow Y_i\rightarrow X_{i-1}\oplus X_{i-1}'\rightarrow
\cdots \rightarrow X_2\oplus X_2' \rightarrow Y_1 \rightarrow K\rightarrow 0,
\end{equation*}
where $Y_1$ is the pullback of homomorphisms $X_1\rightarrow K$ and $X_1'\rightarrow K$, and $Y_i$
is the pushout of $K\rightarrow X_i$ and $K\rightarrow X_i'$. The zero element of $Ext_B^i(K,K)$ is
the class of any split sequence $\xi$.

The product in $E(B)$ is given by concatenation of sequences:
\begin{align*}
Ext_B^i(K,K)\times Ext_B^j(K,K) & \to Ext_B^{i+j}(K,K)\\
([\chi],[\xi])& \mapsto [\chi][\xi]:=[\chi\xi],
\end{align*}
where
\begin{equation*}
\xi: 0 \rightarrow K\rightarrow X_i\rightarrow \cdots \rightarrow X_1\rightarrow K\rightarrow 0,
\end{equation*}
\begin{equation*}
\chi: 0 \rightarrow K\rightarrow X_j'\rightarrow \cdots \rightarrow X_1'\rightarrow K\rightarrow 0,
\end{equation*}
\begin{equation*}
\chi\xi: 0 \rightarrow K\rightarrow X_j'\rightarrow \cdots \rightarrow X_1'\rightarrow
X_i\rightarrow \cdots \rightarrow X_1\rightarrow K\rightarrow 0.
\end{equation*}
Note that the unit of $E(B)$ is the equivalence class of $0\to K\xrightarrow{i_K}K\to 0$.

Thus, $E(B)=\bigoplus_{i\geq 0}E^i(B)$ is a connected $\mathbb{N}$-graded algebra, where
$E^i(B):=Ext_B^i(K,K)$ is a $K$-vector space. Observe that definition (\ref{equation18.3.5})
extends the usual notion of Yoneda algebra of graded algebras.

\subsection{The Poincaré series}

Another invariant that we want to consider is the Poincaré series; let $B$ be a $FSG$ algebra; as
we observed above, $E(B)$ is connected and graded; if $E(B)$ is finitely generated, then $E(B)$ is
locally finite, and hence, the \textit{Poincaré series} of $B$ is defined as the Hilbert series of
$E(B)$, i.e.,
\begin{equation}\label{equation19.3.5}
P_B(t):=\sum_{n=0}^\infty(\dim_{K} Ext_B^n(K,K))t^n.
\end{equation}
By Corollary \ref{corollary16.1.9}, $P_B(t)$ is well-defined if $E(B)$ is generated in degree $1$.
In our next theorem we will show that in this case $P_B(t)$ can be also defined by the $Tor$ vector
spaces (compare with \cite{Backelin1} and \cite{Froberg}).

\begin{theorem}\label{corollary18.3.20}
Let $B$ be a $FSG$ algebra such that $K$ has a $B$-free homogeneous resolution and $E(B)$ is
finitely generated. Then, for every $n\geq 0$
\begin{center}
$Tor_n^B(K,K)\cong Ext_B^n(K,K)$.
\end{center}
\end{theorem}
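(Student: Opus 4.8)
The plan is to prove $Tor_n^B(K,K) \cong Ext_B^n(K,K)$ by exhibiting both as the homology/cohomology of essentially the same complex, built from a single minimal free resolution of the left $B$-module $K$. Since the hypothesis guarantees that $K$ has a $B$-free homogeneous resolution, I would first fix such a resolution
\begin{equation*}
\cdots \to P_2 \to P_1 \to P_0 \to K \to 0,
\end{equation*}
where each $P_i$ is a free semi-graded $B$-module and all maps are homogeneous. By definition, $Tor_n^B(K,K)$ is the $n$-th homology of $K \otimes_B P_\bullet$, and $Ext_B^n(K,K)$ is the $n$-th cohomology of $\mathrm{Hom}_B(P_\bullet, K)$. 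The whole point will be that, because $K \cong B/B_{\geq 1}$ is the trivial module and the resolution can be taken minimal, the differentials induced on both derived functor complexes vanish, so each group is computed directly from the free-module data in degree $n$.

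The key steps, in order, are as follows. First I would show that the resolution can be chosen \emph{minimal}, meaning every differential $d_i$ satisfies $d_i(P_i) \subseteq B_{\geq 1}P_{i-1}$; this uses that $B$ is connected ($B_0 = K$) with augmentation ideal $B_{\geq 1}$, and that the semi-graduation forces a Nakayama-type argument (a homogeneous free resolution of a finitely generated module over such an augmented algebra admits a minimal one, obtained by choosing homogeneous generators of each syzygy module lifting a $K$-basis of $\mathrm{Tor}$). Second, I would observe that applying $K \otimes_B -$ to $P_\bullet$ kills the differentials: since $d_i(P_i) \subseteq B_{\geq 1}P_{i-1}$ and $K \otimes_B B_{\geq 1}P_{i-1} = (B/B_{\geq 1}) \otimes_B B_{\geq 1}P_{i-1} = 0$, each induced map $1 \otimes d_i$ is zero. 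Hence $Tor_n^B(K,K) = K \otimes_B P_n$. If $P_n$ is free of rank $r_n$, this is $K^{r_n}$. Third, the dual computation: applying $\mathrm{Hom}_B(-,K)$ to $P_\bullet$, minimality again forces the codifferentials $\mathrm{Hom}(d_i,K)$ to vanish, because any homomorphism $P_{i-1} \to K$ precomposed with $d_i$ factors through $B_{\geq 1}P_{i-1}$, which maps to $0$ in $K$. Thus $Ext_B^n(K,K) = \mathrm{Hom}_B(P_n, K) \cong K^{r_n}$.

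Finally I would conclude by matching dimensions: both $Tor_n^B(K,K)$ and $Ext_B^n(K,K)$ are $K$-vector spaces of the same dimension $r_n$ (the rank of $P_n$), giving the desired isomorphism for every $n \geq 0$. One can make the identification canonical by noting $\mathrm{Hom}_B(P_n, K) = \mathrm{Hom}_B(P_n, K) \cong \mathrm{Hom}_K(K \otimes_B P_n, K)$, so $Ext_B^n(K,K)$ is the $K$-linear dual of $Tor_n^B(K,K)$, and over a field finite-dimensional duals are isomorphic to the original space.

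I expect the main obstacle to be establishing the existence and correct behavior of a \emph{minimal} homogeneous free resolution in the semi-graded (not strictly $\mathbb{N}$-graded) setting. In the classical connected graded case this is standard via graded Nakayama, but here the multiplication only satisfies $B_pB_q \subseteq B_1 \oplus \cdots \oplus B_{p+q}$ rather than landing in a single homogeneous component, so I would need to verify carefully that the induced $\mathbb{N}$-filtration (\ref{equ17.5.1}) together with the local finiteness and connectedness still lets one lift $K$-bases of the $Tor$-spaces to homogeneous generating sets of the syzygies, and that the finite generation of $E(B)$ (hence local finiteness of each $Ext^n$) guarantees each $P_n$ has finite rank so that the vector-space duality is between finite-dimensional spaces. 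The filtration argument of Proposition \ref{proposition17.5.5} and Remark \ref{remark18.1.7} should supply exactly the Nakayama lemma needed for this step.
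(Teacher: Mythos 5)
Your reduction to a \emph{minimal} resolution is exactly where the argument breaks, and although you flag this as the main obstacle, flagging it does not resolve it. Existence of minimal homogeneous free resolutions in the connected graded case rests on the graded Nakayama lemma, whose proof uses that $B_{\geq 1}M$ lives in degrees strictly above the bottom degree of $M$. In the semi-graded setting this degree bookkeeping fails by design: the axioms only give $B_pM_q\subseteq \bigoplus_{k\leq p+q}M_k$, so $B_{\geq 1}M$ can hit the bottom degree of $M$. Indeed, the naive semi-graded Nakayama lemma is simply false: take $B=K[x]$ and $M=K[x]/(x-1)$ with the semi-graduation concentrated in degree $0$ (this is a legitimate semi-graded module, since $B_mM_0\subseteq M_0$ is allowed when degrees may only drop); then $M\neq 0$ is finitely generated and semi-graded, yet $B_{\geq 1}M=M$. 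So Proposition \ref{proposition17.5.5} and Remark \ref{remark18.1.7}, which you cite, do not supply the Nakayama input you need, and nothing else in the semi-graded toolkit does; moreover, nothing guarantees that the syzygy modules are finitely generated ($B$ need not be Noetherian), which your lifting construction also tacitly requires. Since the theorem's hypothesis only provides \emph{some} homogeneous free resolution, not a minimal one, your proof as written does not go through.

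The fix --- and it is the paper's actual proof --- is to notice that minimality is not needed at all. Your closing remark already contains the key identity: by tensor-hom adjunction and $K\cong Hom_B(K,K)$,
\begin{equation*}
Hom_B(B^{(X_n)},K)\cong Hom_B\bigl(B^{(X_n)},Hom_B(K,K)\bigr)\cong Hom_B\bigl(K\otimes_B B^{(X_n)},K\bigr)=Hom_K\bigl(K\otimes_B B^{(X_n)},K\bigr),
\end{equation*}
the last equality because $B_{\geq 1}$ annihilates $K\otimes_B B^{(X_n)}$. This isomorphism is natural, so it identifies the whole complex $Hom_B(P_\bullet,K)$ with the $K$-linear dual of the complex $K\otimes_B P_\bullet$ for an \emph{arbitrary} homogeneous free resolution $P_\bullet$, with whatever nonzero differentials it carries. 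Exactness of $K$-dualization then gives $Ext_B^n(K,K)\cong Tor_n^B(K,K)^*$ directly, and finite generation of the connected graded algebra $E(B)$ forces $\dim_K Ext_B^n(K,K)<\infty$, so the duality collapses to the asserted isomorphism $Tor_n^B(K,K)\cong Tor_n^B(K,K)^*\cong Ext_B^n(K,K)$. In short: apply your duality observation at the level of complexes, rather than term by term after an unjustified minimalization, and the theorem follows with no Nakayama-type lemma at all.
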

\begin{proof}
Consider the $B$-free homogeneous resolution of $K$ {\small
\begin{equation*}
\cdots \rightarrow B^{(X_n)}\xrightarrow{\beta_{n}} B^{(X_{n-1})}\xrightarrow{\beta_{n-1}} \cdots
\xrightarrow{\beta_2} B^{(X_1)}\xrightarrow{\beta_1} B^{(X_0)}\xrightarrow{\beta_0} K\to 0,
\end{equation*}}
the $Ext^n_B(K,K)$ and the $Tor_n^B(K,K)$ spaces can be computed applying $Hom_B(-,K)$ and
$K\otimes_B -$, respectively, {\tiny
\begin{equation*}
0\to  Hom_{B}(K,K)\xrightarrow{\beta_0^*}\cdots
\xrightarrow{\beta_{n-1}^*}Hom_{B}(B^{(X_{n-1})},K)\xrightarrow{\beta_{n}^*}Hom_{B}(B^{(X_{n})},K)\to
\cdots
\end{equation*}
\begin{equation*}
\cdots \rightarrow K\otimes_B B^{(X_n)}\xrightarrow{i_K\otimes \beta_{n}} K\otimes_B
B^{(X_{n-1})}\xrightarrow{i_K \otimes \beta_{n-1}} \cdots \xrightarrow{i_K\otimes \beta_1}
K\otimes_B B^{(X_0)}\xrightarrow{i_K \otimes \beta_0} K\to 0;
\end{equation*}}
as in Remark \ref{17.1.2}, it is easy to show that $K\cong Hom_B(K,K)$ as $B$-bimodules, whence we
have
\begin{center}
$Hom_B(B^{(X_n)},K)\cong Hom_B(B^{(X_n)}, Hom_B(K,K))\cong Hom_B(K\otimes_B B^{(X_n)}, K)$,
\end{center}
but $B_{\geq 1}(K\otimes_B B^{(X_n)})=0$, then from the previous isomorphism and considering that
$Tor^B_n(K,K)^*=Hom_{K}(Tor^B_n(K,K),K)$, we get
\begin{center}
$Tor^B_n(K,K)^*\cong Ext^n_B(K,K),$
\end{center}
but since $E(B)$ is finitely generated, then $\dim_K Ext^n_B(K,K)<\infty$, and from this
\begin{center}
$Tor_n^B(K,K)\cong Tor^B_n(K,K)^*\cong Ext_B^n(K,K)$.
\end{center}
\end{proof}

\begin{corollary}\label{corollary19.3.22}
Let $B$ be a $FSG$ algebra such that $K$ has a $B$-free homogeneous resolution and $E(B)$ is
finitely generated in degree $1$, then $P_B(t)$ is well-defined and it is also given by
\begin{equation}
P_B(t)=\sum_{n=0}^\infty(\dim_{K} Tor_n^B(K,K))t^n.
\end{equation}
\end{corollary}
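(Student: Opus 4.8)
The plan is to read off the result as a nearly immediate consequence of Theorem~\ref{corollary18.3.20}, combined with the definition of the Poincar\'e series and Corollary~\ref{corollary16.1.9}. First I would note that the hypotheses of Corollary~\ref{corollary19.3.22} are slightly stronger than those of Theorem~\ref{corollary18.3.20}: here $E(B)$ is assumed finitely generated \emph{in degree~1}, which in particular implies $E(B)$ is finitely generated, so Theorem~\ref{corollary18.3.20} applies and gives, for every $n\geq 0$, the isomorphism $Tor_n^B(K,K)\cong Ext_B^n(K,K)$ of $K$-vector spaces.

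The second ingredient is well-definedness. Since $E(B)$ is connected $\mathbb{N}$-graded and finitely generated in degree~$1$, Corollary~\ref{corollary16.1.9} applies to $E(B)$ (viewing it as a connected graded algebra finitely generated in degree~$1$), so its Hilbert series is independent of the choice of graduation; by the definition of the Poincar\'e series as the Hilbert series of $E(B)$ given in~(\ref{equation19.3.5}), this shows $P_B(t)$ is well-defined. I would state this step explicitly, since well-definedness is part of the assertion and is exactly where the degree-$1$ hypothesis (as opposed to mere finite generation) is used.

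Finally I would combine the two. Starting from~(\ref{equation19.3.5}),
\begin{equation*}
P_B(t)=\sum_{n=0}^\infty(\dim_{K} Ext_B^n(K,K))t^n,
\end{equation*}
and substituting the isomorphism from Theorem~\ref{corollary18.3.20}, which equates the dimensions $\dim_K Ext_B^n(K,K)=\dim_K Tor_n^B(K,K)$ term by term, yields
\begin{equation*}
P_B(t)=\sum_{n=0}^\infty(\dim_{K} Tor_n^B(K,K))t^n,
\end{equation*}
as claimed. There is essentially no obstacle here: the entire content has already been discharged in Theorem~\ref{corollary18.3.20}, whose proof carries the real analytic and homological work (the self-duality $K\cong Hom_B(K,K)$, the Hom-tensor adjunction, and the finite-dimensionality forcing $Tor^B_n(K,K)\cong (Tor^B_n(K,K))^*$). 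The only point requiring care is bookkeeping on hypotheses---confirming that ``finitely generated in degree~$1$'' supplies both the finite generation needed to invoke Theorem~\ref{corollary18.3.20} and the degree-$1$ generation needed to invoke Corollary~\ref{corollary16.1.9} for well-definedness---so I would make sure to cite each of these earlier results at the precise place its hypothesis is consumed.
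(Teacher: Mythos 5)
Your proposal is correct and follows exactly the paper's route: the paper's proof is the one-line observation that the result ``follows from (\ref{equation19.3.5}) and the previous theorem,'' with well-definedness via Corollary \ref{corollary16.1.9} having been noted in the text immediately preceding. You have simply made explicit the hypothesis bookkeeping (degree-$1$ generation implying finite generation, and where each hypothesis is consumed) that the paper leaves implicit.
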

\begin{proof}
This follows from (\ref{equation19.3.5}) and the previous theorem.
\end{proof}

\section{Koszulity}

\noindent Koszul algebras were defined by Stewart B. Priddy in \cite{Priddy}. Later in 2001, Roland
Berger in \cite{Berger2} introduces a generalization of Koszul algebras which are called
\emph{generalized Koszul algebras or $N$-Koszul algebras}. The $2$-Koszul algebras of Roland Berger
are the Koszul algebras of Priddy (for the definition of Koszul algebras adopted in this paper see
Remark \ref{remark4.6}). $N$-Koszul algebras are finitely graded where all generators of the ideal
$I$ of relations are homogeneous and have the same degree $N\geq 2$. In 2008 Thomas Cassidy and
Brad Shelton (\cite{Cassidy2}) generalize the $N$-Koszul algebras introducing the $\mathcal{K}_2$
algebras; these type of algebras accept that the generators of $I$ have different degrees, but
again all generators are homogeneous since the $\mathcal{K}_2$ algebras are graded. Later, Phan in
\cite{Phan} extended this notion to $\mathcal{K}_m$ algebras for any $m\geq1$.

In this section we study the semi-graded version of Koszulity, and for this purpose we will follow
the lattice interpretation of this notion (see \cite{Backelin1}, \cite{BackelinFroberg},
\cite{Berger2}, \cite{Froberg}, \cite{Polishchuk}).

\subsection{Semi-graded Koszul algebras}

Recall that a \emph{lattice} is a collection $L$ endowed with two idempotent commutative and
associative binary operations $\wedge, \vee: L \times L\to L$ satisfying the following
\emph{absorption identities}: $a \wedge (a \vee b)= a$, $(a\wedge b)\vee b=b$. A sublattice of a
lattice $L$ is a non empty subset of $L$ closed under $\wedge$ and $\vee$. A lattice is called
\emph{distributive} if it satisfies the following distributivity identity: $a\wedge (b \vee c) =
(a\wedge b) \vee (a\wedge c)$. If $X\subseteq L$, the sublattice \emph{generated} by $X$, denoted
$[X]$, consists of all elements of $L$ that can be obtained from the elements of $X$ by the
operations $\wedge$ and $\vee$. We will say that $X$ is \emph{ distributive} if $[X]$ is a
distributive lattice. The \emph{$($direct$)$ product} of the family of lattices
$\{L_\omega\}_{\omega\in \Omega}$ is defined as follow:
\[
\prod_{\Omega}L_\omega:=(\prod_{\Omega}L_\omega,\wedge, \vee),
\]
which is the cartesian product with $\wedge$ and $\vee$ operating component-wise. A
\emph{semidirect product} of the family $\{L_\omega\}_{\omega\in \Omega}$ is a sublattice $L$ of
$\prod_{\Omega}L_\omega$ such that for every $\omega_0\in \Omega$, the composition
\begin{center}
$L\hookrightarrow \prod_{\Omega}L_\omega\twoheadrightarrow L_{\omega_0}$
\end{center}
is surjective.

\begin{proposition}[\cite{Backelin1}]\label{proposition19.4.1}
If $L$ is a semidirect product of the family $\{L_\omega\}_{\omega\in \Omega}$, then $L$ is
distributive if and only if for all $\omega\in \Omega$, $L_\omega$ is distributive.
\end{proposition}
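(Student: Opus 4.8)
The plan is to prove both directions of the equivalence, exploiting the fact that $L$ sits inside the product $\prod_{\Omega}L_\omega$ and that each projection $\pi_{\omega_0}:L\to L_{\omega_0}$ is a surjective lattice homomorphism.

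First I would dispose of the easy direction: suppose $L$ is distributive. For any fixed $\omega_0\in\Omega$, the projection $\pi_{\omega_0}:L\to L_{\omega_0}$ is a surjective homomorphism of lattices (surjectivity is precisely the semidirect-product hypothesis, and it is a homomorphism because $\wedge$ and $\vee$ act component-wise in $\prod_{\Omega}L_\omega$). Since a homomorphic image of a distributive lattice is distributive, $L_{\omega_0}=\pi_{\omega_0}(L)$ is distributive. As $\omega_0$ was arbitrary, every $L_\omega$ is distributive.

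For the converse, assume every $L_\omega$ is distributive; I want to conclude that $L$ is distributive. The key observation is that distributivity is an identity, checked element by element, and $\wedge,\vee$ in $L$ are computed coordinatewise. So for $a,b,c\in L$, the element $a\wedge(b\vee c)$ has $\omega$-th coordinate $a_\omega\wedge(b_\omega\vee c_\omega)$, while $(a\wedge b)\vee(a\wedge c)$ has $\omega$-th coordinate $(a_\omega\wedge b_\omega)\vee(a_\omega\wedge c_\omega)$. By distributivity of $L_\omega$ these two coordinates agree for every $\omega$, hence the two elements of $\prod_{\Omega}L_\omega$ coincide, and therefore they coincide in $L$. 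Thus $L$ satisfies the distributivity identity.

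The converse direction carries essentially no obstacle once one records that $\wedge$ and $\vee$ on a sublattice of a product are inherited coordinatewise; the content is entirely in the forward direction, where surjectivity of the projection is exactly what the semidirect-product condition supplies. I would therefore expect the main (minor) point worth stating carefully to be that $\pi_{\omega_0}$ is a genuine lattice homomorphism and that images of distributive lattices under homomorphisms are distributive; the coordinatewise verification in the converse is then immediate and needs no further machinery.
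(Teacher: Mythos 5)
Your proof is correct. Note that the paper itself gives no proof of this proposition; it is quoted directly from Backelin's thesis \cite{Backelin1}, so there is nothing to compare against. Your argument is the standard one: both directions reduce to the fact that distributivity is an equational identity, hence inherited by sublattices of products (the converse, where the semidirect-product hypothesis is not even needed) and by surjective lattice homomorphisms (the forward direction, where surjectivity of each projection $\pi_{\omega_0}\colon L\to L_{\omega_0}$ is exactly what the semidirect-product condition provides).
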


Let $K$ be a field and $V$ be a $K$-vector space, the set $L(V)$ of all its linear subspaces is a
lattice with respect to the operations of sum and intersection.

\begin{proposition}[\cite{Polishchuk}]\label{proposition19.4.2}
Let $V$ be a vector space and $X_1,\dots, X_n\subseteq V$ be a finite collection of subspaces of
$V$. The following conditions are equivalent:
\begin{enumerate}
\item[\rm (i)]The collection $X_1,\dots, X_n$ is distributive.
\item[\rm (ii)]There exists a basis $\mathcal{B}:=\{\omega_i\}_{i\in \mathcal{C}}$ of $V$ such that
each of the subspaces $X_i$ is the linear span of a set of vectors $\omega_i$.
\item[\rm (iii)]There exists a basis $\mathcal{B}$ of $V$ such that $\mathcal{B}\cap X_i$ is a
basis of $X_i$, for every $1\leq i \leq n$.
\end{enumerate}
\end{proposition}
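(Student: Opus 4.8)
The plan is to dispatch the two easy links (ii) $\Leftrightarrow$ (iii) and (ii) $\Rightarrow$ (i) first, and then concentrate all the effort on (i) $\Rightarrow$ (ii). For (ii) $\Leftrightarrow$ (iii): if each $X_i$ is the span of a subset $S_i\subseteq\mathcal{B}$, then, because $\mathcal{B}$ is a basis, a basis vector lies in $X_i=\operatorname{span}(S_i)$ only if it already belongs to $S_i$; hence $\mathcal{B}\cap X_i=S_i$ is a basis of $X_i$, and the converse is trivial. For (ii) $\Rightarrow$ (i), I would consider the map $\Phi\colon 2^{\mathcal{B}}\to L(V)$, $S\mapsto\operatorname{span}(S)$. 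Since $\mathcal{B}$ is a basis one has $\Phi(S\cup T)=\Phi(S)+\Phi(T)$ and, crucially, $\Phi(S\cap T)=\Phi(S)\cap\Phi(T)$, so $\Phi$ is a lattice homomorphism; therefore $[X_1,\dots,X_n]=\Phi([S_1,\dots,S_n])$ is a homomorphic image of a sublattice of the distributive Boolean lattice $2^{\mathcal{B}}$, hence distributive. This direction is straightforward.

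The heart of the matter is (i) $\Rightarrow$ (ii), which I would prove by induction on $n$, the base case $n\le 1$ being immediate (extend a basis of $X_1$ to $V$). Put $Z:=X_n$ and pass simultaneously to the subspace $Z$ and the quotient $V/Z$. The first key step is to check that the collections $\{X_i\cap Z\}_{i<n}$ in $Z$ and $\{\bar X_i:=(X_i+Z)/Z\}_{i<n}$ in $V/Z$ are again distributive. The assignments $X\mapsto X\cap Z$ and $X\mapsto (X+Z)/Z$ are lattice homomorphisms on $[X_1,\dots,X_{n-1}]$; the only nontrivial point is that they preserve the remaining operation, which follows from the two distributive identities $(X+Y)\cap Z=(X\cap Z)+(Y\cap Z)$ and $(X\cap Y)+Z=(X+Z)\cap(Y+Z)$, the latter obtained by applying $a\wedge(b\vee c)=(a\wedge b)\vee(a\wedge c)$ twice. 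Each of the two collections has $n-1$ members, so the induction hypothesis supplies a basis $\mathcal{C}$ of $Z$ adapted to all the $X_i\cap Z$ and a basis $\bar{\mathcal{D}}$ of $V/Z$ adapted to all the $\bar X_i$.

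The delicate step, and the one I expect to be the \textbf{main obstacle}, is lifting $\bar{\mathcal{D}}$ to the right place. One cannot simply extend an arbitrary adapted basis of $X_1,\dots,X_{n-1}$: this already fails in dimension two, where $X_1=\langle e_1\rangle$ and $X_n=\langle e_1+e_2\rangle$ admit no common adapted refinement of the basis $\{e_1,e_2\}$, so the lifts must be chosen to respect intersections. For $\bar d\in\bar{\mathcal{D}}$ set $T(\bar d):=\{i<n:\bar d\in\bar X_i\}$; the crucial observation is that, since the quotient map preserves intersections, $\bar d$ lies in $\overline{\,\bigcap_{i\in T(\bar d)}X_i\,}$, so one may choose a lift $d\in\bigcap_{i\in T(\bar d)}X_i$. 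With these choices, $\mathcal{B}:=\mathcal{C}\cup\mathcal{D}$ is a basis of $V$ because $\mathcal{C}$ is a basis of $Z$ and $\mathcal{D}$ reduces to a basis of $V/Z$.

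It then remains to verify that $\mathcal{B}$ is adapted to every $X_i$. For $X_n=Z$ this is clear since $\mathcal{B}\cap Z=\mathcal{C}$. For $i<n$ and $x\in X_i$, one expands $\bar x$ over those $\bar d\in\bar{\mathcal{D}}\cap\bar X_i$ (each corresponding $d$ already lying in $X_i$ by the choice of lift), subtracts, and lands in $X_i\cap Z$, which is spanned by $\mathcal{C}\cap X_i$; hence $x\in\operatorname{span}(\mathcal{B}\cap X_i)$. Checking that these spanning computations close up exactly is routine once the lifts are placed in the correct intersections, so I expect the whole difficulty of the argument to reside in the two distributive identities of the second paragraph and in arranging the lifts inside $\bigcap_{i\in T(\bar d)}X_i$.
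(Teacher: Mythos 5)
Your proposal is correct, and all three implications are handled soundly. One remark on the comparison you were asked about: the paper itself contains \emph{no} proof of this proposition --- it is stated as a quoted result from Polishchuk--Positselski \cite{Polishchuk} --- so there is no in-paper argument to measure against; the relevant benchmark is the standard proof in that reference, and your route is essentially that one. The two easy links are fine: (ii)$\Leftrightarrow$(iii) by linear independence of $\mathcal{B}$, and (ii)$\Rightarrow$(i) via the span map $\Phi$ on the Boolean lattice $2^{\mathcal{B}}$, where the key identity $\Phi(S\cap T)=\Phi(S)\cap\Phi(T)$ holds exactly because $\mathcal{B}$ is a basis, and distributivity passes to homomorphic images since it is an equational property. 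For the main implication (i)$\Rightarrow$(ii), your induction on $n$ with the splitting along $Z=X_n$ is the standard device, and you correctly identify the two load-bearing points: first, that $X\mapsto X\cap Z$ and $X\mapsto (X+Z)/Z$ are lattice homomorphisms on $[X_1,\dots,X_{n-1}]$, which uses both the distributive law and its dual (valid in any distributive lattice, applied inside $[X_1,\dots,X_n]$); second, that a lift of $\bar d$ can be chosen inside $\bigcap_{i\in T(\bar d)}X_i$, which rests on the identity $\bigcap_{i\in T}(X_i+Z)=\bigl(\bigcap_{i\in T}X_i\bigr)+Z$ obtained by iterating the dual distributive law over $T$ --- a point worth writing out explicitly in a final version, since it is the only place where distributivity of the \emph{whole} collection (not just of pairs) is invoked. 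The final verification that $\mathcal{B}=\mathcal{C}\cup\mathcal{D}$ is adapted to each $X_i$ is routine exactly as you say, because each chosen lift $d$ with $\bar d\in\bar X_i$ already lies in $X_i$, so subtracting the lifted part of $x\in X_i$ lands in $X_i\cap Z=\operatorname{span}(\mathcal{C}\cap X_i)$. Note also that your induction is on the number of subspaces, not on $\dim V$, so the argument covers infinite-dimensional $V$ as the statement requires.
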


With the previous elementary facts about lattices, we have the following notions associated to any
$FSG$ algebra presented as in (\ref{equ17.1.1}) (compare with \cite{Backelin1}).

\begin{definition}
Let $B=K\{ x_1,\dots,x_n\}/I$ be a $FSG$ algebra. The lattice associated to $B$ is the sublattice
$L(B)$ of subspaces of the free algebra $F:=K\{ x_1,\dots,x_n\}$ generated by $\{F_{\geq
1}^sI^gF_{\geq 1}^h|s,g,h\geq 0\}$. For any integer $j\geq 2$, the $j-th$ lattice associated to $B$
is defined by
\[
L_j(B):=[\{F_sI_gF_h | s,h\geq 0, g\geq 2, s+g+h=j\}]\subset \{\text{subspaces of}\, F_j; \cap,
+\},
\]
where $F_sI_gF_h$ is the subspace of $F_j$ consisting of finite sums of elements of the form $abc$,
with $a\in F_s, b\in I_g, c\in F_h$, and
\[
I_g:=\{a_g\in F_g|a_g\, \text{is the $g-th$ component of some element in $I$}\}.
\]
\end{definition}
For any two-sided ideal $H$ of $F$, the $K$-subspace $H_g$ is defined similarly. From now on in
this section we will denote $F:=K\{ x_1,\dots,x_n\}$.

\begin{theorem}\label{theorem19.4.4}
Let $B=K\{ x_1,\dots,x_n\}/I$ be a $FSG$ algebra with $I=\langle b_1,\dots,b_m\rangle$ such that
$b_i\in F_{\geq 1}$ for $1\leq i\leq m$. Then $L(B)$ is a semidirect product of the family of
lattices
\[
\{L_j(B)\cup \{0,F_j\}_{j\geq 2}\}\cup \{\{0,K\},\{0,F_1\}\}.
\]
In particular, $L(B)$ is distributive if and only if for all $j\geq 2$, $L_j(B)$ is distributive.
\end{theorem}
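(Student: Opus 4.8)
The plan is to establish this as a direct application of Proposition \ref{proposition19.4.1} (the Backelin criterion for semidirect products), so the real work is to verify two things: first, that $L(B)$ is naturally realized as a semidirect product of the displayed family of lattices, and second, that the ``small'' factors $\{0,K\}$, $\{0,F_1\}$, and each $\{0,F_j\}$ are automatically distributive (being chains of length at most two, hence trivially distributive). Once this is in place, Proposition \ref{proposition19.4.1} immediately yields that $L(B)$ is distributive if and only if each factor is, and since the two-element chains contribute nothing, distributivity reduces to the condition that $L_j(B)$ be distributive for all $j\geq 2$, which is exactly the ``in particular'' clause.

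First I would set up the grading of the free algebra $F=K\{x_1,\dots,x_n\}=\bigoplus_{j\geq 0}F_j$ and observe that each generator $F_{\geq 1}^s I^g F_{\geq 1}^h$ of $L(B)$ decomposes into its homogeneous components in the $F_j$. The key structural remark is that the ideal $I$ is generated by elements $b_i\in F_{\geq 1}$, so $I\subseteq F_{\geq 1}$ and hence the homogeneous components of any product $F_{\geq 1}^s I^g F_{\geq 1}^h$ live in degrees $j\geq s+2g+h$ (using $g\geq 2$ for the relevant factors and that each $b_i$ has degree at least $1$). The plan is to show that the assignment sending a subspace $W\in L(B)$ to the tuple of its homogeneous components $(W\cap F_j)_{j}$ embeds $L(B)$ into the product $\prod_j L(F_j)$, and that this embedding lands inside the product of the prescribed factors. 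Concretely, I would argue that each generator $F_{\geq 1}^s I^g F_{\geq 1}^h$ is itself homogeneous-component-wise a member of the sublattice generated by the $F_s I_g F_h$, so its degree-$j$ piece lies in $L_j(B)\cup\{0,F_j\}$ (with the degree $0$ and $1$ pieces landing in $\{0,K\}$ and $\{0,F_1\}$ respectively, since in those low degrees no relation of order $g\geq 2$ contributes).

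Next I would verify the defining surjectivity condition of a semidirect product: for each fixed index (each $j\geq 2$, as well as the indices for $K$ and $F_1$), the composite projection $L(B)\hookrightarrow\prod L_\omega\twoheadrightarrow L_{\omega_0}$ must be onto. This amounts to checking that every generator $F_s I_g F_h$ of $L_j(B)$ (with $s+g+h=j$) actually arises as the degree-$j$ homogeneous component of some element of $L(B)$, which it does: the generator $F_{\geq 1}^s I^g F_{\geq 1}^h$ of $L(B)$ has $F_s I_g F_h$ as a summand of its degree-$j$ component, and by intersecting and summing generators one recovers all of $L_j(B)$ in the image; the factors $\{0,F_j\}$, $\{0,K\}$, $\{0,F_1\}$ are hit by the generators with $s=g=h=0$ (giving all of $F$ and hence each $F_j$) and by the zero subspace.

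The main obstacle I anticipate is bookkeeping the homogeneous decomposition cleanly, i.e.\ showing that the lattice operations $\cap$ and $+$ in $F$ really do operate component-wise across the grading so that the component map is a lattice homomorphism onto a semidirect product, rather than merely a set map. This requires that intersections and sums of the relevant homogeneous-generated subspaces respect the grading, which follows because each generator $F_{\geq 1}^s I^g F_{\geq 1}^h$ is a sum of homogeneous subspaces and the grading of $F$ is a direct sum decomposition, but one must confirm that taking the sublattice generated by such subspaces does not introduce elements mixing degrees in an incompatible way. Once the component-wise behavior is confirmed, the identification with the semidirect product and the final application of Proposition \ref{proposition19.4.1} are routine, and the distributivity equivalence drops out immediately because all factors other than the $L_j(B)$ are two-element chains.
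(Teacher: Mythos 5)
Your high-level skeleton --- embed $L(B)$ into the product, check the projections are onto, invoke Proposition \ref{proposition19.4.1}, and note that two-element chains are trivially distributive --- is indeed the paper's skeleton, but that much is dictated by the statement itself; the substance lies in the two steps you defer, and both have real problems. The map you propose, $W\mapsto (W\cap F_j)_j$, is not the right one. Because the generators $b_i$ of $I$ are not homogeneous, the subspaces $F_{\geq 1}^sI^gF_{\geq 1}^h$ are \emph{not} graded subspaces of $F$; your claim that each generator ``is a sum of homogeneous subspaces'' is false, and it fails precisely because of what distinguishes the semi-graded setting from the graded one. Consequently $W\cap F_j$ differs from the space $W_j$ of degree-$j$ \emph{components} of elements of $W$, and the theorem is about the latter: $I_g$ is defined as a component space, not as $I\cap F_g$, and $L_j(B)$ is generated by the $F_sI_gF_h$. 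With your map the projections need not be surjective: for the enveloping algebra of a non-abelian Lie algebra $\mathcal{G}$, the commutator $xy-yx$ lies in $I_2$ (it is the degree-$2$ component of the relation $xy-yx-[x,y]$), yet $xy-yx\notin I$ since its image in $\cU(\mathcal{G})$ is $[x,y]\neq 0$; hence $I\cap F_2\subsetneq I_2$ and the generator $I_2$ of $L_2(B)$ is never reached. Relatedly, your degree bound $j\geq s+2g+h$ conflates the exponent $g$ of $I^g$ with the degree subscript $g\geq 2$ of $I_g$: since the $b_i$ are only assumed to lie in $F_{\geq 1}$, the correct bound is $s+g+h$, and $I_1$ need not vanish (the dispin-type relations have nonzero degree-one components), so your dismissal of the degree-$1$ coordinate is not correct as stated.

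The main gap, however, is that the assertion you say you ``would argue'' --- that each generator $F_{\geq 1}^sI^gF_{\geq 1}^h$ is, degree by degree, a member of the sublattice generated by the $F_sI_gF_h$ --- is the actual mathematical content of the theorem, and no argument for it appears in your proposal. The paper proves it in three steps: first $(F_{\geq 1}^sI^gF_{\geq 1}^h)_j=F_s(I^g)_{j-s-h}F_h$; then $(I^g)_j=\sum F_{k_0}I_{l_1}F_{k_1}\cdots F_{k_{g-1}}I_{l_g}F_{k_g}$; and finally, crucially,
\[
F_{k_0}I_{l_1}F_{k_1}\cdots F_{k_{g-1}}I_{l_g}F_{k_g}=\bigcap_{a=1}^{g}F_{k_0+l_1+\cdots+k_{a-1}}I_{l_a}F_{k_a+\cdots+k_g},
\]
which converts a product containing $g\geq 2$ interleaved factors of $I$ into an intersection of the single-$I$ subspaces that generate $L_j(B)$. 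This identity is what makes the homogeneous data of $I^g$ land in $L_j(B)$ at all, and its proof uses a rigidity property special to the free algebra: if a homogeneous $a$ factors as $a=bc=de$ with $\deg b\leq \deg d$, then $d=bf$ and $c=fe$ for some homogeneous $f$. Nothing in your proposal supplies this step or any substitute for it; without it there is no reason why, say, $(I^2)_j$ should belong to the lattice generated by the $F_sI_gF_h$, and your argument reduces to a restatement of what is to be proved.
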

\begin{proof}
The proof of Lemma 2.4 in \cite{Backelin1} can be easy adapted.

\textit{Step 1}. For any $j\geq 2$ and any $X\in L_j(B)$ we have $0\subseteq X\subseteq F_j$. So
$L_j(B)\cup \{0,F_j\}$ is in fact a lattice.

\textit{Step 2}. If $s\geq 0$, $g\geq 1$, $h\geq 0$ and $j\geq 2+s+h$, then
\[
(F_{\geq 1}^sI^gF_{\geq 1}^h)_j=F_s(I^g)_{j-s-h}F_.
\]
We only have to prove that $(F_{\geq 1}^sI^gF_{\geq 1}^h)_j\subseteq F_s(I^g)_{j-s-h}F_h$ since the
other containment is trivial. Recall that one element of $(F_{\geq 1}^sI^gF_{\geq 1}^h)_j$ is the
$j-th$ component of some element of $F_{\geq 1}^sI^gF_{\geq 1}^h$; let $z_j\in (F_{\geq
1}^sI^gF_{\geq 1}^h)_j$, then there exists $y\in F_{\geq 1}^sI^gF_{\geq 1}^h$ such that $z_j$ is
the $j-th$ component of $y$; the element $y$ is a finite sum of elements of the form $abc$, with
$a\in F_{\geq 1}^s=F_{\geq s}$, $b\in I^g$ and $c\in F_{\geq 1}^h=F_{\geq h}$, so the $j-th$
component of $y$ is a sum of the $j-th$ components of elements of the form $a_kba_t$, with $k\geq
s$, $b\in I^g$ and $t\geq h$, but since $F_k=F_sF_{k-s}$ for $k\geq s$ and $F_t=F_{t-h}F_h$ for
$t\geq h$, then the $j-th$ component of $a_kba_t$ is the $j-th$ component
$a_s(a_{k-s}ba_{t-h})a_h$, i.e., it is an element of $F_s(I^g)_{j-s-h}F_h$.

\textit{Step 3}. For $g\geq 1$ and $j\geq 2$,
\[
(I^g)_j=\sum F_{k_0} I_{l_1}F_{k_1}I_{l_2}\cdots F_{k_{g-1}}I_{l_g}F_{k_g},
\]
where the sum is taken over all relevant $k_0,\dots,k_g,l_1,\dots,l_g$ such that $\sum_m k_m+\sum_n
l_n=j$. Indeed, if $p\in I^g$, then $p$ is a finite sum of elements of the form
$a^{(0)}p_1a^{(1)}p_2\cdots a^{(g-1)}p_ga^{(g)}$, with $a^{(r)}\in F$, $p_i\in \{b_1,\dots,b_m\}$,
$0\leq r\leq g$, $1\leq i\leq g$.

\textit{Step 4}. For any $g\geq 2$ and any $2g+1$ non-negative integers $k_0,\dots,k_g,
l_1,\dots,l_g$ we have
\[
F_{k_0} I_{l_1}F_{k_1}I_{l_2}\cdots F_{k_{g-1}}I_{l_g}F_{k_g}=\bigcap_{a=1}^g
F_{k_0+l_1+\cdots+k_{a-1}}I_{l_a}F_{k_a+\cdots+k_g}.
\]
In fact, let $q=a_0p_1a_1\cdots p_ga_g\in F_{k_0} I_{l_1}F_{k_1}I_{l_2}\cdots
F_{k_{g-1}}I_{l_g}F_{k_g}$, with $a_r\in F_{k_r}$, $p_i\in I_{l_i}$, $0\leq r\leq g$, $1\leq i\leq
g$, then $q\in F_{k_0+l_1+\cdots+k_{a-1}}I_{l_a}F_{k_a+\cdots+k_g}$ for every $1\leq a\leq g$; the
converse follows from the fact that for any $a\in F-\{0\}$ homogeneous with $a=bc=de$, then
$b,c,d,e$ are homogeneous; in addition, if $b\in F_k$, $d\in F_t$ with $t\geq s$, then there is $f$
such that $a=bfe$, $d=bf$ and $c=fe$.

\textit{Step 5}. For any $s\geq 0$, $g\geq 1$, $h\geq 0$ and $j<1+s+h$ we have $(F_{\geq
1}^sI^gF_{\geq 1}^h)_j=0$ since $b_i\in F_{\geq 1}$ for $1\leq i\leq m$; likewise, for $j<g$,
$(I^g)_j=0$.

From these steps, $L(B)$ is a sublattice of the product of the given family, i.e.,
\[
L(B)\hookrightarrow \{0,K\}\times \{0,F_1\}\times (\prod_{j\geq 2}L_j(B)\cup \{0,F_j\}).
\]
Finally, fix $j\geq 2$, then $L(B)\to L_j(B)\cup \{0,F_j\}$ is a lattice surjective map since: (a)
$(I^g)_j=0$ if $j< g$; (b) $(F_{\geq 1}^s)_j=F_j$ if $j\geq s$; (c) if $s,h\geq 0$, $g\geq 2$ and
$s+g+h=j$, then $F_sI_gF_h=(F_{\geq 1}^sI^gF_{\geq 1}^h)_j$. The cases $j=0,1$ can be proved by the
same method. Thus, $L(B)$ is a semidirect product of the given family.
\end{proof}

\begin{definition}\label{definition19.4.5}
Let $B=K\{ x_1,\dots,x_n\}/I$ be a $FSG$ algebra. We say that $B$ is semi-graded Koszul, denoted
$S\mathcal{K}$, if $B$ satisfies the following conditions:
\begin{enumerate}
\item[\rm (i)]$B$ is finitely presented with $I=\langle b_1,\dots,b_m\rangle$ and
$b_i\in F_{\geq 1}$ for $1\leq i\leq m$.
\item[\rm (ii)]$L(B)$ is distributive.
\end{enumerate}
\end{definition}

\begin{remark}\label{remark4.6}
(i) In the present paper we adopt the following definition of Koszul algebras (see
\cite{Backelin1}, \cite{BackelinFroberg}, \cite{Berger2}, \cite{Froberg}, \cite{Polishchuk}). Let
$B$ be a $K$-algebra; it is said that $B$ is \textit{Kozul} if $B$ satifies the following
conditions: (a) $B$ is $\mathbb{N}$-graded, connected, finitely generated in degree one; (b) $B$ is
quadratic, i.e., the ideal $I$ in $(\ref{equ17.1.1})$ is finitely generated by homogeneous elements
of degree $2$; (c) $L(B)$ is distributive.

(ii) From (i) it is clear that any Koszul algebra is $S\mathcal{K}$. Many examples of skew $PBW$
extensions are actually Koszul algebras. In \cite{Suarez} and \cite{ReyesSuarez2} was proved that
the following skew $PBW$ extensions are Koszul algebras: The classical polynomial algebra; the
particular Sklyanin algebra; the  multiplicative analogue of the Weyl algebra; the algebra of
linear partial $q$-dilation operators; the \-mul\-ti-\-pa\-ra\-meter quantum affine $n$-space, in
particular, the quantum plane; the 3-dimensional skew polynomial algebra with $|\{\alpha, \beta,
\gamma\}|=3$; the Sridharan enveloping algebra of 3-dimensional Lie algebra with
$[x,y]=[y,z]=[z,x]=0$; The Jordan plane; algebras of diffusion type; $\mathcal{A}(\mathcal{G})$;
the algebra $\textbf{U}$; the Manin algebra, or more generally, the algebra $\mathcal{O}_q(M_n(K))$
of quantum matrices; some quadratic algebras in $3$ variables.
\end{remark}

The next theorem gives a wide list of $\mathcal{SK}$ algebras within the class of skew $PBW$
extensions. If at least one of the constants $a_{ij}^{(k_{i,j})}$ is non zero, then the algebra is
not Koszul but it is $\mathcal{SK}$.

\begin{theorem}\label{theorem19.4.13}
If $A$ is a skew $PBW$ extension of a field $K$ with presentation $A=K\{x_1,\ldots,x_n\}/I$, where
\begin{center}
$I=\langle x_jx_i-c_{ij}x_ix_j-a_{ij}^{(k_{i,j})}x_{k_{i,j}}|c_{ij},a_{ij}^{(k_{i,j})}\in K,
c_{ij}\neq 0, 1\leq j<i\leq n\rangle$,
\end{center}
then $A$ is $\mathcal{SK}$.
\end{theorem}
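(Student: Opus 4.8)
The plan is to verify the two conditions of Definition~\ref{definition19.4.5}. Condition (i) is immediate: the given generators $b_{ij}=x_jx_i-c_{ij}x_ix_j-a_{ij}^{(k_{i,j})}x_{k_{i,j}}$ (for $1\le j<i\le n$) are finite in number, and each lies in $F_{\ge 1}$ since every term has degree $1$ or $2$. So the whole force of the theorem is condition (ii), the distributivity of $L(A)$. By Theorem~\ref{theorem19.4.4} this is equivalent to the distributivity of $L_j(A)$ for every $j\ge 2$, and by Proposition~\ref{proposition19.4.2} the latter amounts to producing, for each $j$, a single $K$-basis $\mathcal{B}_j$ of $F_j$ such that every generating subspace $F_sI_gF_h$ (with $s+g+h=j$, $g\ge 2$) is the span of $\mathcal{B}_j\cap F_sI_gF_h$.

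First I would compute the homogeneous components $I_g=\mathrm{proj}_g(I)$ explicitly. Writing $r_{ij}:=x_jx_i-c_{ij}x_ix_j$ for the degree-$2$ part of $b_{ij}$ and projecting a typical element $p\,b_{ij}\,q$ onto degree $g$, one finds
\[
I_g=R_g+C_g,\qquad R_g=\sum_{s+h=g-2}\sum_{i<j}F_s\,r_{ij}\,F_h,\quad C_g=\sum_{s+h=g-1}F_s\,V\,F_h,
\]
where $V\subseteq F_1$ is the subspace spanned by the correction variables $x_{k_{i,j}}$ with $a_{ij}^{(k_{i,j})}\neq 0$, and $R_g$ is exactly the degree-$g$ component of the homogeneous quadratic ideal $R:=\langle r_{ij}\rangle$. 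Since each generator of $L_j(A)$ is a sum $F_sR_gF_h+F_sC_gF_h$, it suffices to find a basis $\mathcal{B}_j$ adapted to the finer family consisting of the quadratic subspaces $F_{s'}R_2F_{h'}$ (with $s'+h'=j-2$) and the linear monomial subspaces $F_{s'}V F_{h'}$ (with $s'+h'=j-1$); the latter are coordinate subspaces for the word basis of $F_j$. The graded half of this is already available: by Theorem~\ref{filteredskew} the associated graded algebra $Gr(A)$ is the quasi-commutative skew $PBW$ extension with relations $x_jx_i=c_{ij}x_ix_j$, i.e.\ a multiparameter quantum affine space, which is a quadratic Koszul algebra (Remark~\ref{remark4.6}); thus $L_j(Gr(A))$ is distributive and reduction to normal forms supplies an adapted basis for the family $\{F_{s'}R_2F_{h'}\}$.

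The core of the argument, and the step I expect to be the main obstacle, is to merge these two adapted structures into a single basis $\mathcal{B}_j$. I would do this by exploiting that the relations $b_{ij}$ form a confluent rewriting system: being a skew $PBW$ extension, $A$ has the $K$-basis $\mathrm{Mon}(A)$ of standard monomials, which is precisely the statement that every overlap ambiguity $x_kx_jx_i$ with $k>j>i$ resolves to a unique normal form (the Gr\"obner theory of skew $PBW$ extensions, \cite{Acosta1}, \cite{Gallego2}). Using this confluence I would build $\mathcal{B}_j$ from the reduced (weakly increasing) words together with the reduction elements attached to each non-reduced word, and verify, position by position, that each such element lies either wholly inside or wholly outside every coordinate subspace $F_{s'}VF_{h'}$ while still respecting each $F_{s'}R_2F_{h'}$. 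The delicate point is exactly the compatibility at the two slots touched by a relation: a reduction element $u\,r_{ij}\,v$ spreads across the adjacent letters $x_j,x_i$, and its membership in a slot-test coordinate subspace must be consistent for both monomials $u\,x_jx_i\,v$ and $u\,x_ix_j\,v$; this is where the $PBW$ constraints on the parameters $c_{ij},a_{ij}^{(k_{i,j})}$ (the Jacobi-type identities guaranteeing confluence) are indispensable.

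An alternative route I would keep in reserve is homogenization: adjoin a central degree-$1$ variable $t$ and replace each $b_{ij}$ by the homogeneous quadratic relation $\widehat{b}_{ij}=x_jx_i-c_{ij}x_ix_j-a_{ij}^{(k_{i,j})}t\,x_{k_{i,j}}$. The resulting graded algebra $\widehat{A}$ is again a skew $PBW$-type extension and is quadratic; proving $\widehat{A}$ Koszul by the graded lattice criterion and then specializing $t\mapsto 1$ would transport the distributivity of $L(\widehat{A})$ to $L(A)$. Either way the substance is the same: the non-homogeneous corrections $a_{ij}^{(k_{i,j})}x_{k_{i,j}}$ break gradedness, so $A$ need not be Koszul in the classical sense, yet confluence of the skew $PBW$ reduction forces the semi-graded lattice $L(A)$ to stay distributive, giving $A\in\mathcal{SK}$.
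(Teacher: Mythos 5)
Your overall framework matches the paper's: reduce via Theorem \ref{theorem19.4.4} and Proposition \ref{proposition19.4.2} to exhibiting, in each degree $m$, a single $K$-basis of $F_m$ adapted to the generating subspaces, and your computation $I_g=R_g+C_g$ is correct. But there is a genuine gap, located exactly at the step you yourself flag as ``the main obstacle'': you never construct the adapted basis, you only outline a plan (confluence of the rewriting system, a ``position by position'' verification, or alternatively homogenization and specialization $t\mapsto 1$, none of it carried out). Worse, the reduction you propose cannot be completed at all: the finer family $\{F_{s'}R_2F_{h'}\}\cup\{F_{s'}VF_{h'}\}$ is in general \emph{not} distributive, so no basis adapted to it exists. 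Take $n=2$ and the single relation $x_2x_1-x_1x_2-x_1$ (the enveloping algebra of the two-dimensional non-abelian Lie algebra, which the theorem covers). Then $V=Kx_1$, and in degree $2$ one has $R_2=K\,(x_2x_1-x_1x_2)$, $VF_1=\mathrm{span}_K\{x_1^2,x_1x_2\}$, $F_1V=\mathrm{span}_K\{x_1^2,x_2x_1\}$, so that
\[
R_2\cap(VF_1+F_1V)=R_2\neq 0=(R_2\cap VF_1)+(R_2\cap F_1V).
\]
By Proposition \ref{proposition19.4.2} no common adapted basis of $F_2$ exists. The ``delicate compatibility at the two slots touched by a relation'' is therefore not a technical difficulty that confluence or the $PBW$ identities can resolve; it is an impossibility. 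The same failure occurs whenever some relation $r_{ij}$ has exactly one of the indices $i,j$ lying in $J$, which is the typical situation for the non-graded algebras the theorem is aimed at.

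The paper's proof avoids this trap by never refining: it works directly with the coarse subspaces $F_sI_gF_h$, exploiting that $C_g\subseteq I_g$ \emph{absorbs} every quadratic relation $r_{ij}$ in which $i$ or $j$ lies in $J$ (both of its monomials then contain a $J$-letter, so $r_{ij}$ is swallowed by a monomial subspace). Hence genuine ``relation vectors'' are needed in the adapted basis only for pairs $i,j\notin J$, and the paper builds $\mathcal{B}_m$ explicitly by a case analysis on $|J|$: for $|J|=n$ and $|J|=n-1$ every generator $F_sI_gF_h$ is a coordinate subspace in the word basis, while for $|J|\leq n-2$ the basis mixes words with shifted copies of the relations among the non-$J$ variables (the sets $C_r^{(m)}$). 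If you want to repair your argument, aim the basis at the subspaces $F_sI_gF_h$ themselves rather than at the finer quadratic/linear family; your observation that $Gr(A)$ is a Koszul quantum affine space can then supply the adapted vectors for the non-$J$ relations, which is essentially what the paper's construction does.
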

\begin{proof}
Note that $A$ is a $FSG$ algebra. Let $F:=K\{x_1,\dots,x_n\}$, $N:=\{x_1,\ldots,x_n\}$, and
$J:=\{k_{i,j}\in\{1,\ldots,n\}|a_{k_{i,j}}\neq 0, 1\leq i<j\leq n\}$. We are going to show that
$L_m(A)$ is distributive lattice for $m\geq 2$.

If $|J|=n$, we define

\begin{align*}
\mathcal{B}_m:=\left(\bigcup_{r=1}^{m}D_r^{(m)}\right),
\end{align*}
where
$$D_r^{(m)}:=\{a_1\cdots a_{r-1}x_ia_{r+1}\cdots a_m|a_t\in N, t=1,\ldots,r-1,r+1,\ldots,n;1\leq i\leq n\};$$
$\mathcal{B}_m$ is a basis of $F_m$. Now, consider $F_sI_gF_h\leq F_m$ with $s,h\geq 0$, $g\geq 2$
and $s+g+h=m$. Since $F_sI_gF_h$ is generated by $D_{s+1}^{(m)},\ldots, D_{s+g}^{(m)}$, then
$F_sI_gF_h\cap \mathcal{B}_m=\bigcup_{r=s+1}^{s+g}D_r^{(m)}$, which is a basis of $F_sI_gF_h$.

If $|J|=n-1$, define

\begin{align*}
\mathcal{B}_m:=\left(\bigcup_{r=1}^{m}D_r^{(m)}\right)\cup \{x_l^m\},
\end{align*}
where $l\notin J$, and
$$D_r^{(m)}:=\{a_1\cdots a_{r-1}x_ia_{r+1}\cdots a_m|a_t\in N, t=1,\ldots,r-1,r+1,\ldots,n;i\in J\};$$
again $\mathcal{B}_m$ is a basis of $F_m$. As before, consider $F_sI_gF_h\leq F_m$ with $s,h\geq
0$, $g\geq 2$ and $s+g+h=m$; since $F_sI_gF_h$ is generated by $D_{s+1}^{(m)},\ldots,
D_{s+g}^{(m)}$, then $F_sI_gF_h\cap \mathcal{B}_m=\bigcup_{r=s+1}^{s+g}D_r^{(m)}$, which is a basis
of $F_sI_gF_h$.

If $|J|\leq n-2$, we define

\begin{align*}
\mathcal{B}_m:=\left(\bigcup_{r=1}^{m-1}B_r^{(m)}\right)\cup\left(\bigcup_{r=1}^{m-1}C_r^{(m)}\right)\cup
\left(\bigcup_{r=1}^m D_r^{(m)}\right)\cup E,
\end{align*}
where
\begin{align*}
\scriptstyle B_r^{(m)}:=\{a_1\cdots a_{r-1}x_jx_ia_{r+2}\cdots a_m|a_t\in N; t=1,2,\ldots,r-1,r+2,\ldots, m;i,j\notin J; i<j\},\\
\scriptstyle C_r^{(m)}:=\{a_1\cdots a_{r-1}(x_ix_j-c_{ij}x_jx_i)a_{r+2}\cdots a_m|a_t\in N; t=1,2,\ldots,r-1,r+2,\ldots, m;i,j\notin J; i<j\},\\
\scriptstyle D_r^{(m)}:=\{a_1\cdots a_{r-1}x_la_{r+1}\cdots a_m|a_t\in N, t=1,\ldots,r-1,r+1,\ldots,n;l\in J\},\\
\scriptstyle E=\{x_i^m|i\notin J\}.
\end{align*}
$\mathcal{B}_m$ is a basis of $A_m$; consider $F_sI_gF_h\leq F_m$ with $s,h\geq 0$, $g\geq 2$ and
$s+g+h=m$; since $F_sI_gF_h$ is generated by $C_{s+1}^{(m)},\ldots,C_{s+g-2}^{(m)},
D_{s+1}^{(m)},\ldots, D_{s+g}^{(m)}$, then $F_sI_gF_h\cap
\mathcal{B}_m=\bigcup_{r=s+1}^{s+g}C_r^{(m)}\cup\left(\bigcup_{r=s+1}^{s+g-2}D_r\right)$, which is
a basis of $F_sI_gF_h$.
\end{proof}

\begin{example}\label{example4.8}
(i) The following algebras satisfy the conditions of the previous theorem, and hence, they are
$\mathcal{SK}$ (but not Koszul): The dispin algebra $\mathcal{U}(osp(1,2))$; the $q$-Heisenberg
algebra; the quantum algebra $\mathcal{U}'(\mathfrak{so}(3,K))$; the Woronowicz algebra
$\mathcal{W}_{\nu}(sl(2,K))$; the algebra $S_h$ of shift operators; the algebra $D$ for
multidimensional discrete linear systems; the algebra of linear partial shift operators.

(ii) The following algebras do not satisfy the conditions of the previous theorem, but by direct
computation we proved that the lattice $L(B)$ is distributive, so they are $\mathcal{SK}$ (but not
Koszul): The algebra $V_q(\mathfrak{sl}_3(\mathbb{C}))$; the Witten's deformation of
$\mathcal{U}(\mathfrak{sl}(2,K)$; the quantum symplectic space
$\mathcal{O}_q(\mathfrak{sp}(K^{2n}))$.

(iii) In the following table we summarize the examples of skew $PBW$ extensions that Koszul or
semi-graded Koszul:
\begin{table}[htb]
\centering \tiny{
\begin{tabular}{|l|l|l|}\hline
\textbf{$FSG$ algebra} & \textbf{K} & \textbf{SK}\\
\hline \hline Classical polynomial algebra $K[x_1,\dots,x_n]$
 & $\checkmark$ & $\checkmark$\\
\cline{1-3} Some universal enveloping algebras of a Lie algebras $\mathcal{G}$, $\cU(\mathcal{G})$
 & $\times$ & $\checkmark$\\
\cline{1-3} Some Sridharan enveloping algebras of 3-dimensional Lie algebras
 & $\checkmark$ & $\checkmark$\\
\cline{1-3} Particular  Sklyanin algebra
 & $\checkmark$ & $\checkmark$\\
\cline{1-3} Homogenized enveloping algebra $\mathcal{A}(\mathcal{G})$
 & $\checkmark$ & $\checkmark$\\
\cline{1-3} Algebra of shift operators $S_h$
 & $\times$ & $\checkmark$\\
\cline{1-3} Algebra of discrete linear systems
$K[t_1,\dotsc,t_n][x_1;\sigma_1]\dotsb[x_n;\sigma_n]$
 & $\times$ & $\checkmark$\\
\cline{1-3} Linear partial shift operators $K[t_1,\dotsc,t_n][E_1,\dotsc,E_m]$
 & $\times$ & $\checkmark$\\
\cline{1-3} Linear partial shift operators $K(t_1,\dotsc,t_n)[E_1,\dotsc,E_m]$
 & $\times$ & $\checkmark$\\
\cline{1-3} L. Partial $q$-dilation operators $K[t_1,\dotsc,t_n][H_1^{(q)},\dotsc,H_m^{(q)}]$
 & $\times$ & $\checkmark$\\
\cline{1-3} L. Partial $q$-dilation operators $K(t_1,\dotsc,t_n)[H_1^{(q)},\dotsc,H_m^{(q)}]$
 & $\times$ & $\checkmark$\\
\cline{1-3} Algebras of diffusion type
 & $\checkmark$ & $\checkmark$\\
\cline{1-3} Multiplicative analogue of the Weyl algebra $\cO_n(\lambda_{ji})$
 & $\checkmark$ & $\checkmark$\\
\cline{1-3} Quantum algebra $\cU'(\mathfrak{so}(3,K))$
 & $\times$ & $\checkmark$\\
\cline{1-3} Some 3-dimensional skew polynomial algebras
 & $\checkmark$ & $\checkmark$\\
\cline{1-3} Dispin algebra $\cU(osp(1,2))$
 & $\times$ & $\checkmark$\\
\cline{1-3} Woronowicz algebra $\cW_{\nu}(\mathfrak{sl}(2,K))$
 & $\times$ & $\checkmark$\\
\cline{1-3} Complex algebra $V_q(\mathfrak{sl}(3,\mathbb{C}))$
 &  $\times$ & $\checkmark$\\
\cline{1-3} Algebra \textbf{U}
& $\checkmark$ & $\checkmark$\\
\cline{1-3} Manin algebra $M_q(2)$, $\cO_q(M_2(K))$
 & $\checkmark$ & $\checkmark$\\
\cline{1-3} $q$-Heisenberg algebra \textbf{H}$_n(q)$
 & $\times$ & $\checkmark$\\
\cline{1-3} Witten's deformation of $\cU(\mathfrak{sl}(2,K)$
& $\times$ & $\checkmark$\\
\cline{1-3} Quantum symplectic space $\cO_q(\mathfrak{sp}(K^{2n}))$
& $\times$ & $\checkmark$\\
\cline{1-3} Some quadratic algebras in 3 variables
& $\checkmark$ & $\checkmark$\\
\cline{1-3} Multi-parameter quantum affine n-space
& $\checkmark$ & $\checkmark$\\
\cline{1-3} Jordan plane
& $\checkmark$ & $\checkmark$\\
\cline{1-3}
\end{tabular}}
\caption{$FSG$ algebras: Koszul (K), semi-graded Koszul (SK)}\label{table19.2}
\end{table}

\end{example}
\begin{example}
There exist $FSG$ algebras that are not $\mathcal{SK}$: The algebra $A=K\{x,y\}/\langle x^2-xy,
yx,y^3\rangle$ (see (\cite{Cassidy2}) ) is not a skew $PBW$ extension, but is a $FSG$ algebra. This
algebra satisfies that $L(A)$ is a subdirect product of the family of lattices \begin{align*}
\{L_j(A)\cup \{0, A_j\}\}_{j\geq 2}\cup \{\{0, K\},\{0, A_1\}\},
\end{align*}
but $L_3(A)$ is not distributive. In fact, note that the lattice $L_3(A)$ is generated by $A_1I_2, I_2A_1, I_3$ and
\begin{enumerate}
    \item $A_1I_2$ is $K$-generated by $D=\{x^3-xyx, x^2y-xy^2, yx^2,yxy\}$, and $D$ is $K$-linearly independent, therefore $\dim_K(A_1I_2)=4$.
    \item $I_2A_1$ is $K$-generated by  $C=\{x^3-x^2y, yx^2-yxy, xyx, y^2x\}$, which is $K$-linearly independent, therefore $\dim_K(I_2A_1)=4$.
\end{enumerate}
Now, let us suppose $\mathcal{B}=\{a_1,a_2,\ldots,a_8\}$ be a $K$-basis of $A_3$ such that $X:=\mathcal{B}\cap A_1I_2$ is a basis of $A_1I_2$ and $Y:=\mathcal{B}\cap I_2A_1$ is a basis of $I_2A_1$.

Without loss of generality, suppose that $X=\{a_1,\ldots,a_4\}$, then $yx^2=\lambda_1 a_1+\lambda_2 a_2+\lambda_3 a_3+\lambda_4 a_4$ and $yxy=\beta_1 a_1+\beta_2 a_2+\beta_3 a_3+\beta_4 a_4$ with  $\lambda_i,\beta_i\in K$ for $1\leq i\leq 4$, $\lambda_1\neq \beta_1$ (maybe organizing), $\lambda_1\neq 0$ and $\lambda_j\neq\beta_j$, for some $j=2,3,4$, otherwise, if $\lambda_j=\beta_j$, for $j=2,3,4$, then $yxy-\frac{\beta_1}{\lambda_1}yx^2=0$, which is imposible. So $$yx^2-yxy=(\lambda_1-\beta_1)a_1+(\lambda_2-\beta_2)a_2+(\lambda_3-\beta_3)a_3+(\lambda_4-\beta_4)a_4,$$
with at least $a_1,a_j\in X\cap Y$, consequently \begin{align*}
a_1=&\alpha_1(x^3-xyx)+\alpha_2(x^2y-xy^2)+\alpha_3(yx^2)+\alpha_4(yxy)\\=&\gamma_1(x^2-x^2y)+\gamma_2(yx^2-yxy)+\gamma_3(xyx)+\gamma_4(y^2x),\\
a_j=&\eta_1(x^3-xyx)+\eta_2(x^2y-xy^2)+\eta_3(yx^2)+\eta_4(yxy)\\=&\mu_1(x^2-x^2y)+\mu_2(yx^2-yxy)+\mu_3(xyx)+\mu_4(y^2x),
\end{align*}
with $\alpha_i,\gamma_i,\eta_i,\mu_i\in K$ for $1\leq i\leq 4$. Thus there exist two different
$K$-combinations non-trivial of $C\cup D$ equal to $0$, hence the base $\mathcal{B}$ does not
exist. Thus, $A$ is a $FSG$ algebra but is not $\mathcal{SK}$.

\end{example}

\subsection{Poincaré series of skew $PBW$ extensions}

We conclude computing the Poincaré series of some skew $PBW$ extensions of $K$.

\begin{theorem}\label{theorem19.4.8}
Let $A=\sigma(K)\langle x_1,\dots,x_n\rangle$ be a skew $PBW$ extension of the field $K$ that is a
Koszul algebra, then the Poincaré series of $A$ is well-defined and given by
$P_A(t)=(1+t)^n$.
\end{theorem}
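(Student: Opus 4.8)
The plan is to combine two ingredients: the explicit value of the Hilbert series of $A$, which is already available from the skew $PBW$ machinery, together with the classical numerical consequence of Koszulity, namely the duality $h_A(t)\,P_A(-t)=1$ between the Hilbert and the Poincar\'e series. Once both are established, substituting $h_A(t)=\frac{1}{(1-t)^n}$ into the duality relation forces $P_A(-t)=(1-t)^n$, and therefore $P_A(t)=(1+t)^n$.

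First I would pin down the Hilbert series. Being Koszul, $A$ is in particular $\mathbb{N}$-graded, connected and finitely generated in degree one (Remark \ref{remark4.6}), hence a finitely graded $K$-algebra; by Remark \ref{remark17.5.11}(i) its ordinary Hilbert series coincides with its generalized Hilbert series. Since $A=\sigma(K)\langle x_1,\dots,x_n\rangle$ is a skew $PBW$ extension of $K$ that is $FSG$, Corollary \ref{corollary16.3.16} (resting on Theorem \ref{17.5.10}) gives $Gh_A(t)=\frac{1}{(1-t)^n}$, so that $h_A(t)=\frac{1}{(1-t)^n}$. Next, Koszulity implies that the Yoneda algebra $E(A)$ is generated in degree one; by Corollary \ref{corollary16.1.9} this already makes $P_A(t)$ well-defined, and since the minimal graded resolution of $K$ is an $A$-free homogeneous resolution with $E(A)$ finitely generated in degree one, Corollary \ref{corollary19.3.22} lets me write $P_A(t)=\sum_{n\geq 0}(\dim_K Tor_n^A(K,K))\,t^n$.

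The core step is the duality relation. The defining Koszul condition (equivalently, distributivity of $L(A)$) is precisely the statement that the minimal graded free resolution of $K$ is \emph{linear}: setting $b_n:=\dim_K Tor_n^A(K,K)$, the $n$-th term of this resolution is a free module of rank $b_n$ generated in degree $n$, whose Hilbert series is therefore $b_n t^n h_A(t)$. Taking the Euler characteristic of this exact resolution as a formal power series identity, with homology $K$ of Hilbert series $1$, yields
\begin{equation*}
1=\sum_{n\geq 0}(-1)^n b_n t^n\,h_A(t)=h_A(t)\,P_A(-t).
\end{equation*}
Substituting $h_A(t)=\frac{1}{(1-t)^n}$ gives $P_A(-t)=(1-t)^n$, whence $P_A(t)=(1+t)^n$, as asserted. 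The main obstacle is the passage, in this last step, from the lattice-theoretic definition of Koszulity adopted in the paper to the homological purity (linearity) of the resolution; this is the substantive content of Koszul theory. However, because a Koszul skew $PBW$ extension is genuinely $\mathbb{N}$-graded and connected, the classical results of \cite{Priddy}, \cite{Backelin1} and \cite{Polishchuk} apply verbatim, so no new semi-graded argument is required here.
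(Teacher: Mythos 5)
Your proposal is correct and follows essentially the same route as the paper: both derive $h_A(t)=\frac{1}{(1-t)^n}$ from Corollary \ref{corollary16.3.16}, invoke the Koszul duality $h_A(t)P_A(-t)=1$ together with the fact that $E(A)$ is generated in degree one, and conclude via Corollary \ref{corollary19.3.22}. The only difference is presentational: you sketch the classical Euler-characteristic proof of the duality from the linear minimal resolution, whereas the paper simply cites it from the standard references.
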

\begin{proof}
Since $A$ is Koszul, then $h_A(t)P_A(-t)=1$ and $E(A)$ is Koszul, whence $E(A)$ is finitely
generated in degree $1$ (see \cite{Berger2}, \cite{Froberg}, or \cite{Polishchuk}); therefore the
theorem follows from Corollaries \ref{corollary16.3.16} and \ref{corollary19.3.22}.
\end{proof}

\begin{example}\label{example4.12}
From Remark \ref{remark4.6} and Theorem \ref{theorem19.4.8}, we present next the Poincaré series of
some skew $PBW$ extensions of the base field $K$:

\begin{table}[htb]
\centering \footnotesize{
\begin{tabular}{|l|l|}\hline
\textbf{$\mathcal{SK}$ algebra} & \textbf{$P_A(t)$} \\
\hline \hline Classical polynomial algebra $K[x_1,\dots,x_n]$
 & $(1+t)^n$\\
\cline{1-2} Some Sridharan enveloping algebras of 3-dimensional Lie algebras
 & $(1+t)^3$\\
\cline{1-2} Particular  Sklyanin algebra
 & $(1+t)^3$\\
\cline{1-2} L. Partial $q$-dilation operators $K[t_1,\dotsc,t_n][H_1^{(q)},\dotsc,H_m^{(q)}]$
 & $(1+t)^{n+m}$\\
%\cline{1-2} L. Partial $q$-dilation operators $K(t_1,\dotsc,t_n)[H_1^{(q)},\dotsc,H_m^{(q)}]$
% & \\
%\cline{1-2} Diffusion algebras
% & $(1+t)^{2n}$\\
\cline{1-2} Multiplicative analogue of the Weyl algebra $\cO_n(\lambda_{ji})$
 & $(1+t)^n$\\
\cline{1-2} Some 3-dimensional skew polynomial algebras
 & $(1+t)^3$\\
\cline{1-2} Multi-parameter quantum affine n-space
& $(1+t)^n$\\
%\cline{1-2} Jordan plane
%& $(1+t)^2$\\
\cline{1-2}
\end{tabular}}
\caption{Poincaré series of some skew $PBW$ extensions of $K$.}\label{table19.1}
\end{table}

\end{example}

\begin{center}
\textbf{Acknowledgements}
\end{center}
The authors are grateful to James Jim Zhang and the referee for valuable corrections, comments and
suggestions.

%%%%%%%%%%%%%%%%%%%%%%%%%%%%%%%%%%%%%%%%%%%%%%%%%
%%%%%%%%%%%%%%%%%%%%%%%%%%%%%%%%%%%%%%%%%%%%%%%%%

\end{document}